\newcommand{\R}{\mathbb{R}}
\newtheorem{theorem}{Theorem}[section]
\newtheorem{lemma}[theorem]{Lemma}
\title{Convexity, Fourier transforms, and lattice point discrepancy}
\author{Michael Greenblatt}
\date{\today}
\newcommand\blfootnote[1]{%
  \begingroup
  \renewcommand\thefootnote{}\footnote{#1}%
  \addtocounter{footnote}{-1}%
  \endgroup
}
\begin{document}
\maketitle
\begin{abstract} 

In a well-known paper by Bruna, Nagel and Wainger [BNW],  Fourier transform decay estimates were proved for smooth hypersurfaces of finite line type bounding a convex domain. 
In this paper, we 
generalize their results in the following ways. First, for a surface that is locally the graph of a convex real analytic function, we show that a natural analogue holds even when the
 surface in question is not of finite line type. 
Secondly, we show a result for a general surface that is locally the graph of a convex $C^2$ function, or a piece of such a surface defined through real analytic equations, that
 implies an analogous Fourier transform decay theorem in many situations where the oscillatory index is less than $1$. In such situations, for a compact surface
 the exponent provided is sharp. This result has implications for lattice point discrepancy problems, which we describe.

\end{abstract}
\blfootnote{This work was supported by a grant from the Simons Foundation.}

\section{Background and surface measure Fourier transform theorem statements.} 

\subsection {Introduction.}

We  consider Fourier transforms of hypersurface measures in $\R^{n+1}$, $n \geq 1$, where the surface is locally the graph of a convex function.
Specifically, we let $S$ be a bounded $C^2$  hypersurface in $\R^{n+1}$, such that for each $x_0 \in S$, there is a composition of a translation and a rotation, which we call $A_{x_0}$, such 
that  $A_{x_0}(x_0) = 0$, the normal to $S' = A_{x_0}(S)$ at $0$ is the vector $(0,...,0,1)$, and there is a ball $D = B(0,r_0)$ centered at $0$ such that above $D$ the surface $S'$ is 
the graph of a $C^2$ convex function $f(x_1,...,x_n)$ on a neighborhood of the closure $\bar{D}$. We localize the problem by letting $\phi(x_1,...,x_n)$ be a 
$C^1$ real-valued function supported in $D$ and we look at the Fourier transform of the measure $\mu$ on $S'$ where the surface measure is localized by the cutoff function 
$\phi(x_1,...,x_n)$ at the point on $S'$ above $(x_1,...,x_n)$. To be precise, we are looking at
\[ \hat\mu(\lambda_1,...,\lambda_{n+1}) = \int_{\R^n} e^{ - i\lambda_1 x_1 - ... -  i\lambda_nx_n -i\lambda_{n+1}f(x_1,...,x_n)} \phi(x_1,...,x_n)\,dx_1...\,dx_n\tag{1.1}\]
Note that $A_{x_0}$ is such that $f(0) = 0$ and $\nabla f(0) = 0$.

The goal here is to understand the decay of $|\hat\mu(\lambda)|$ for large $|\lambda|$. For this, it is helpful to consider $\hat\mu(\lambda)$ for $\lambda$ on rays $r(v) = 
\{tv: t \in \R\}$ for various directions $v = (v_1,...,v_{n+1})$ with $|v| = 1$. Since the ray is symmetric in $v$ and $-v$, we may assume that $v_{n+1} \geq 0$.
If $v$ is such that the angle between $v$ and the normal to the surface at every point in $S'$ above $D$ is at least some $\epsilon > 0$, then by
 repeated integrations by 
parts one has that $|\hat\mu(\lambda)| \leq C_{\epsilon, N} |\lambda|^N$ for any $N$. Thus the focus is on directions $v$ that are either normal to the surface at some point above $D$ or
close to some such direction. 

A general heuristic when examining an oscillatory integral for large values of a parameter such as $\lambda$ is that the magnitude of the oscillatory integral should be of the same order of 
magnitude as the maximal measure of the points in the domain for which the phase is within a single period. When this principle is applied to the analysis of the decay of $|\hat\mu(\lambda)|$
in a direction $v$ perpendicular to the tangent plane $T_{y}(S')$ for a point $y$ on $S'$ above the closure $\bar{D}$, this heuristic suggests the following.  Let  $\pi$ denote the
 projection onto the first $n$ variables. For any $v$ with $|v| = 1$ and $v_{n+1} \geq 0$, we  define  $s(v)$ to be the "lowest" height  in the $v$ direction achieved on the portion of $S'$ 
above $\bar{D}$. That is, we make the definition
\[ s(v) = \min\{x \cdot v: x \in S',\,\pi(x) \in \bar {D} \} \tag{1.2}\]
Then in view of the convexity of $f$, if the disk $D$ is small enough this maximal measure is bounded by the following, where $m$ denotes the surface measure on $S'$.
\[ m(\{x \in S': \pi(x) \in \bar{D},\,s(v) \leq  x \cdot v \leq  s(v) + |\lambda|^{-1}\}) \tag{1.3}\]
Hence heuristically we expect $(1.1)$ to be bounded by a constant times $(1.3)$. In addition, past experience leads one to expect that if $\phi$ is nonnegative and $s(v)$ is achieved at
 at least one point $x$ where $\phi(\pi(x)) > 0$, then $(1.3)$ should give the correct order of magnitude for $\hat{\mu}(\lambda)$. 

It follows from the classic paper [BNW] that for a smooth compact surface of finite line type bounding a convex domain (finite line type means that no line is tangent to the surface to
infinite order), then for the $t$ and $v$ in question the function $|\hat{\mu}(tv)|$ is in fact bounded by a constant independent of $v$ times  $(1.3)$ for $\lambda = tv$. In [BNW]
they make use of nonisotropic balls with a doubling property. Namely, if one defines $K(x,r) = \{x \in S': \pi(x) \in \bar{D},\,s(v) \leq  x \cdot v \leq  s(v) + r\}$, then the $K(x,r)$ are the balls for a metric space with a doubling property, which can be used to show various properties related to $\hat{\mu}$. (Actually they define the balls globally, where the same
 principle applies.) In this paper, we will see that although there are no families of balls here, one has estimates analogous to those of [BNW] in several settings.

\subsection {Surface measure Fourier transform results.} The first class of surfaces we consider are  
surfaces which are locally the graph of a convex real analytic function, but which are not necessarily of finite line type. Canonical examples of such surfaces are cones given by equations $f(\frac{x_1}{x_{n+1}}, \frac{x_2}{x_{n+1}},...,
\frac{x_n}{x_{n+1}}) = 1$, where the  $(x_1,...,x_n)$ for which $f(x_1,...,x_n) = 1$ form a  compact real analytic surface enclosing a convex domain (this surface will necessarily be of
 finite line type). Other examples include developable surfaces in three dimensions.

\noindent Our theorem in this situation is as follows.

\begin{theorem}
Suppose we are in the setting of $(1.1)$, where $\phi(x)$ is $C^{n+1}$ and supported in D, and $f(x)$ is convex, real analytic, and not identically zero. Then for any $v$ with
$|v| = 1$ and  $v_{n+1} \geq 0$, and any $t \neq  0$ we have an estimate
\[|\hat{\mu}(t v)| \leq C \,m(\{x \in S': \pi(x) \in\bar{D},\,s(v) \leq  x \cdot v \leq  s(v) + |t|^{-1}\}) \tag{1.4}\]
Here $m$ denotes Euclidean surface measure and the constant $C$ depends only on $\phi$ and $f$. 
\end{theorem}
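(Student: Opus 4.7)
The integral appearing in $\hat\mu(tv)$ has the form $\int e^{-it\psi_v(x)}\phi(x)\,dx$ with phase $\psi_v(x) := v_1 x_1+\dots+v_n x_n + v_{n+1} f(x)$, which since $v_{n+1}\geq 0$ and $f$ is convex is real analytic and convex on a neighborhood of $\bar D$. Moreover $s(v) = \min_{x \in \bar D}\psi_v(x)$, and the surface measure on the right of $(1.4)$ equals, up to the bounded Jacobian factor $\sqrt{1+|\nabla f|^2}$ on $\mathrm{supp}\,\phi$, the Lebesgue measure of
\[ E(v,t) := \{x \in \bar D : \psi_v(x) - s(v) \leq |t|^{-1}\}. \]
So my plan is to prove the equivalent bound $|\hat\mu(tv)|\leq C\,|E(v,t)|$ with $C$ depending only on $\phi$ and $f$.

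I would then split cases according to whether the minimizer of $\psi_v$ on $\bar D$ lies near $\mathrm{supp}\,\phi$. If it does not, then $|\nabla\psi_v|$ is bounded below on $\mathrm{supp}\,\phi$, and $N$-fold integration by parts gives $|\hat\mu(tv)| = O_N(|t|^{-N})$; on the other side, convexity of $\psi_v$ provides a matching lower bound on $|E(v,t)|$ via the slab that $E(v,t)$ contains in the direction of $\nabla\psi_v$. In the essential case $\psi_v$ attains its infimum at an interior point $x_0$ near $\mathrm{supp}\,\phi$; after translation I take $x_0 = 0$ and $s(v)=0$, so $\Psi := \psi_v$ becomes a nonnegative convex real analytic function vanishing at the origin.

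For such $\Psi$ I would apply a real analytic resolution of singularities (Hironaka, or a Puiseux-type construction along the lines of the author's earlier work) to write $\Psi\circ\Phi_j(y) = u_j(y)\,y^{\alpha_j}$ on each of finitely many coordinate patches, with $u_j$ a nonvanishing analytic unit. On each patch a standard monomial van der Corput / Phong--Stein estimate bounds the local oscillatory integral by the Lebesgue measure of $\{y : |u_j(y)\,y^{\alpha_j}| \leq |t|^{-1}\}$; pushed forward by $\Phi_j$ this is a subset of $\{\Psi \leq C|t|^{-1}\}$, which by convexity of $\Psi$ is comparable to $E(v,t)$. Summing the finitely many chart-wise bounds then gives $|\hat\mu(tv)|\leq C\,|E(v,t)|$.

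The main obstacle will be ensuring that the constant $C$ is uniform in $v$, since the family $\{\psi_v\}$ varies with $v$ and in the non-finite-line-type setting the Newton data of $\psi_v$ may be arbitrarily degenerate. Convexity is precisely what lets one bypass this difficulty: the sublevel sets $\{\Psi\leq r\}$ are convex, so chart-wise sublevel sets can always be reassembled into a convex set comparable to $E(v,t)$ without needing a positive oscillatory index to get quantitative decay. Verifying that only boundedly many ``types'' of resolution arise as $v$ ranges over the compact parameter space, and organizing the chart-wise estimates into a single sublevel-set bound with constants depending only on $\phi$ and $f$, is where I expect the bulk of the technical work to lie.
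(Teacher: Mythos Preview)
Your proposal takes a genuinely different route from the paper and, as it stands, has a real gap.

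The paper does \emph{not} resolve singularities. Instead it fixes a minimizer $x_0\in\bar D$ of $\psi_v$ (possibly on $\partial\bar D$), passes to polar coordinates $x=x_0+r\omega$, and treats the phase as a one–variable function $P_{x_0,\omega,v}(r)=\psi_v(x_0+r\omega)-s(v)$. Convexity of $f$ makes this function nonnegative with $\partial_r P\ge 0$ and $\partial_{rr}P\ge 0$, which yields the key inequality $P(r)\le r\,\partial_r P(r)$. Real analyticity enters only through a one–dimensional fact (Lemma~4.1, essentially Weierstrass preparation with parameters): some derivative $\partial_r^{\ell}P$, $n+2\le\ell\le k$, is of roughly constant size on each radial segment, uniformly in $(x_0,\omega,v)$. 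This feeds into the Phong--Stein lemma (Lemma~4.3) to get $|\partial_r^j P(r)|\le C r^{1-j}\,\partial_r P(r)$ for $1\le j\le n+2$. After $n+1$ radial integrations by parts one obtains a pointwise bound $|\lambda P(r)|^{-(n+1)}$ on the region $|\lambda P(r)|\gtrsim 1$, and the dyadic sum over level sets collapses (again by convexity) to the single sublevel set $\{P\le|\lambda|^{-1}\}$. Uniformity in $v$ is automatic because everything happens in one radial variable with constants controlled by a compactness argument on $(x_0,\omega,v,r)$.

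Your approach front-loads exactly the difficulty the paper avoids. You acknowledge that the uniformity in $v$ is ``the bulk of the technical work,'' but offer no mechanism for it: a Hironaka resolution of $\psi_v$ depends on $v$, and the combinatorics of the resolution can jump as $v$ varies (precisely in the infinite-line-type situations the theorem is meant to cover, e.g.\ $f$ vanishing along a line). One would need to resolve the whole family in $(x,v)$ simultaneously and track constants through that, which you have not done. A second gap is the assertion that a ``monomial van der Corput / Phong--Stein estimate'' on a chart bounds the local oscillatory integral by the measure of $\{|u_j(y)y^{\alpha_j}|\le |t|^{-1}\}$: after pullback the integral carries the Jacobian $|J\Phi_j|$, itself a monomial times a unit, and in several variables such weighted monomial oscillatory integrals can exceed the corresponding sublevel measure by logarithmic factors; convexity of $\Psi$ downstairs does not repair this on a single chart. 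Finally, your first case (minimizer away from $\mathrm{supp}\,\phi$) needs a uniform lower bound on $|\nabla\psi_v|$ over $\mathrm{supp}\,\phi$ and a uniform lower bound on $|E(v,t)|$, neither of which you supply; the paper never makes this case split at all.

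In short, the paper's polar-coordinate argument is both simpler and already uniform in $v$; your resolution-of-singularities outline would require substantial additional work to close, and the step from chart-wise monomial estimates to a pure sublevel-set bound is not correct as stated.
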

While $(1.4)$ holds for all $t$, it is only of interest for large $|t|$ since the result for $|t| < B$ for a fixed $B$ will follow immediately simply by taking absolute values inside the integrand and 
integrating. The same is true for the other theorems of this paper. 

The natural geometric interpretation of the measure on the right-hand side of $(1.4)$ is the volume of the intersection of the portion of $S'$ over $D'$ with the slab of 
width $|t|^{-1}$ perpendicular to $v$ whose lower side intersects this portion of $S'$ at the lowest possible point in the $v$ direction. There are situations where for a
 given $v$ this does not give the best possible rate of decay, but when $v$ is perpendicular to the tangent plane of $S'$ at a point above $D$ it typically does. 

At first glance, one might wonder if Theorem 1.1 can be proven by taking strictly convex cross sections of the surface parallel to $v$, of some appropriate dimension, then applying [BNW] to the cross sections, and then integrating the resulting estimates in remaining directions corresponding to the intersection of $S'$ with its tangent plane at points $x$ minimizing $x \cdot v$.  But one runs into a few issues if one tries to do this. For one, the dimension of this intersection may vary between different such points $x$, making it unclear what dimension these cross sections 
should be. Furthermore, one needs the resulting estimates to be uniform under perturbations of $v$, many of which will lead to situations where $x \cdot v$ is minimized on the boundary
 of $S'$. Thus this sort of approach is not easy to put into practice.

In a sense, Lemmas 4.1 and 4.3 will provide a substitute for real analytic functions for the finite type condition of [BNW], so that an analogous theorem can be proven. If one has a compact 
 real analytic surface $S$ enclosing a convex domain (which will necessarily be of finite line type), then one can recover 
the Fourier transform bounds of [BNW] for $S$ by using a partition of unity to write the surface measure Fourier transform as a finite sum of integrals of the form $(1.1)$, after appropriate
translations and rotations. For a given piece, one applies Theorem 1.1 in directions that are normal or nearly normal to the tangent planes of the surface piece, and simply integrates by 
parts repeatedly in the oscillatory integral for directions that are not nearly normal to the surface piece. Adding the results will then give the Fourier transform bounds of [BNW] for that surface.

Our second theorem holds for all surfaces that are graphs of convex functions that are at least $C^2$, and where $\phi$ is at least $C^1$, as well as graphs of portions of such surfaces 
carved out by real analytic functions. Specifically, we assume $f(x)$ is $C^2$ and convex, $\phi(x)$ is $C^1$, and 
 $g_1(x),...,g_m(x)$ are real analytic functions defined on a neighborhood of $\bar{D}$, none identically zero.
 We define $I(\lambda)$ to be the Fourier transform of a surface measure carved out by the
the $g_i$ on the surface over $D$ whose graph is $f(x)$. Specifically, we define
\[I(\lambda) = \int_{\{x \in \R^n:\,g_i(x) < \,0 {\rm\,\, for\,\, all\,\,} i \}} e^{ - i\lambda_1 x_1 - ... -  i\lambda_nx_n -i\lambda_{n+1}f(x_1,...,x_n)} \phi(x_1,...,x_n)\,dx_1...\,dx_n\tag{1.5}\]
Note that by choosing $g_2 = -g_1 + 1$, the case where there are no $g_i(x)$ is included in the above. Our theorem for $I(\lambda)$ is as follows.

\begin{theorem}
For any $v = (v_1,...,v_{n+1})$ with $|v| = 1$ and $v_{n+1} \geq 0$, and any $t \neq 0$ we have an estimate
\[|I(tv)| \leq C m(\{x \in S': \pi(x) \in\bar{D},\,s(v) \leq  x \cdot v \leq  s(v) + |t|^{-1}\}) \]
\[ + C  \sum_{j=1}^{\infty}2^{-j}\,m(\{x \in S': \pi(x) \in\bar{D},s(v) + 2^{j-1}|t|^{-1} \leq  x \cdot v \leq  s(v) + 2^j |t|^{-1}\})\tag{1.6}\]
Here $m$ denotes Euclidean surface measure and the constant $C$ depends only on $\phi$, $f$, and the $g_i$.
\end{theorem}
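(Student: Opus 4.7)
I would work on the base $\R^n$ and set $\psi(x_1,\dots,x_n)=v_1 x_1+\cdots+v_n x_n+v_{n+1} f(x)$, so that above $\bar D$ the phase in $(1.5)$ is exactly $-t\,\psi(x)$. Since $f$ is convex and $v_{n+1}\ge 0$, $\psi$ is convex on $\bar D$, and $s(v)=\min_{\bar D}\psi$, attained at some point $x^{*}\in\bar D$. Because $f\in C^{2}$, Lebesgue measure on $D$ and Euclidean surface measure on $S'$ are comparable with constants depending only on $f$, so the measures $m(\,\cdot\,)$ on the right side of $(1.6)$ may be replaced throughout by the base volume measures of the corresponding sublevel slabs of $\psi$, at the cost of a constant absorbed into $C$.

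The strategy is a smooth dyadic decomposition in the variable $\psi(x)-s(v)$. Choose a partition of unity $1=\eta_0+\sum_{j\ge 1}\eta_j$ on $[0,\infty)$ with $\eta_0$ supported in $[0,\,2|t|^{-1}]$ and, for $j\ge 1$, $\eta_j$ supported in $[2^{j-1}|t|^{-1},\,2^{j+1}|t|^{-1}]$; write $I_j(tv)$ for the contribution of the cutoff $\eta_j(\psi(x)-s(v))$ to $I(tv)$. The base piece is handled trivially by $|I_0(tv)|\le\|\phi\|_\infty\,m(\{g_i<0,\ \psi-s(v)\le 2|t|^{-1}\})$, producing the first term of $(1.6)$. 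For $j\ge 1$, convexity of $\psi$ with minimum $s(v)$ at $x^{*}$ yields the key lower bound $|\nabla\psi(x)|\ge(\psi(x)-s(v))/|x-x^{*}|\gtrsim 2^{j}|t|^{-1}$ on the support $\Omega_j$ of $\eta_j$ (using that $\bar D$ has bounded diameter). A single integration by parts along $W=\nabla\psi/|\nabla\psi|^{2}$, using $e^{-it\psi}=(-it)^{-1}\,W\cdot\nabla e^{-it\psi}$, then produces interior and boundary terms of heuristic size $|t|^{-1}|\nabla\psi|^{-1}\lesssim 2^{-j}$ times measures of pieces of $\Omega_j$, matching the $j$-th dyadic term of $(1.6)$.

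The main obstacle is making this integration by parts rigorous on $\{g_i<0\}\cap\Omega_j$, where two contributions need genuine care. The interior (divergence) term involves $\mathrm{div}(\phi W)$, and in particular the quantity $|\mathrm{Hess}(\psi)|/|\nabla\psi|^{2}$, which only gains one power of $|\nabla\psi|^{-1}$ over the Hessian and a priori blows up near $x^{*}$; this is absorbed using the convex-geometric inequality $\int_{\Omega_j}|\mathrm{Hess}(\psi)|/|\nabla\psi|^{2}\,dx\lesssim m(\Omega_j)$, which follows from how the Gauss map of a convex function controls the spacing between consecutive level sets. The boundary terms live on $\{g_i=0\}\cap\Omega_j\subset D$, since $\phi$ is supported in $D$ and so the contribution from $\partial D$ vanishes. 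Here the real analyticity of the $g_i$ is essential: a \L{}ojasiewicz-type estimate of the same flavor as Lemmas 4.1 and 4.3 bounds the relevant $(n-1)$-dimensional measures of $\{g_i=0\}\cap\Omega_j$ by a constant times $m(\Omega_j)$, so the boundary contribution is absorbed into the dyadic slab measure. Summing the base piece $I_0$ and the dyadic pieces $I_j$ over $j\ge 1$ then delivers $(1.6)$.
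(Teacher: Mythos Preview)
Your dyadic decomposition in $\psi-s(v)$ and the plan to gain a factor $2^{-j}$ on each shell by a single integration by parts match the paper's outline, but both inequalities you invoke to close the argument are false, and this is precisely where the paper's decision to integrate by parts in \emph{polar coordinates centered at $x^{*}$}, rather than along $\nabla\psi$, is doing essential work. The interior bound $\int_{\Omega_j}|\mathrm{Hess}(\psi)|/|\nabla\psi|^{2}\,dx\lesssim m(\Omega_j)$ already fails for $\psi(x,y)=x^{2}+y^{2}$: the integrand is comparable to $(x^{2}+y^{2})^{-1}$, whose integral over every annulus $\Omega_j$ is a fixed positive constant, while $m(\Omega_j)\sim 2^{j}|t|^{-1}\to 0$. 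The paper never meets this obstacle because its radial integration by parts produces the one-dimensional quantity $\partial_{rr}P\,(\partial_{r}P)^{-2}=-\partial_r\bigl((\partial_r P)^{-1}\bigr)$, an exact derivative along each ray, so a second integration by parts reduces it to endpoint terms plus a harmless $r^{n-1}$-type integral, with no Hessian-versus-gradient inequality needed; there is no analogous exact-derivative identity for $\mathrm{div}(\nabla\psi/|\nabla\psi|^{2})$ in rectangular coordinates.

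The boundary bound $\sigma_{n-1}(\{g_i=0\}\cap\Omega_j)\lesssim m(\Omega_j)$ compares measures of different dimensions and is likewise false: with $\psi=x^{2}+y^{2}$ and $g_1=-y$, the length of $\{y=0\}\cap\Omega_j$ is $\sim 2^{j/2}|t|^{-1/2}$ while $m(\Omega_j)\sim 2^{j}|t|^{-1}$, so the ratio $\sim 2^{-j/2}|t|^{1/2}$ is unbounded for fixed $j$; no \L{}ojasiewicz-type estimate can repair a dimensional mismatch. In the paper the real analyticity of the $g_i$ enters instead through Lemma~4.2 (not Lemmas~4.1 and~4.3): along each ray from $x^{*}$ the set $\{g_i<0\}$ is a union of at most $N$ intervals, so the radial integration by parts creates at most $2N$ endpoint evaluations per ray, each of size $(|\lambda|\,\partial_r P(\alpha))^{-1}\alpha^{n-1}$. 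Bounding this by $\int_{\alpha/2}^{\alpha}(|\lambda|r\,\partial_r P(r))^{-1}r^{n-1}\,dr$ (using monotonicity of $\partial_r P$) and integrating over $\omega\in S^{n-1}$ converts the boundary contribution back into an $n$-dimensional volume integral over $\bar D$, which then feeds directly into the dyadic sum $(1.6)$.
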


The terms in $(1.6)$ can be interpreted as volumes of intersections of slabs perpendicular to $v$ with the portion of $S'$ above $\bar{D}$, similarly to Theorem 1.1. Note that the pieces 
of $S'$ carved out by the functions $g_i(x)$ are not what are used in defining the slabs, but rather the original surface $S'$.

To help further understand the meaning of Theorem 1.2, suppose we are in a situation where for some $v$ with $|v| = 1$ and $v_{n+1} \geq 0$, there is a $0 < \alpha < 1$ such that
 for all $t \neq 0$ we have an estimate 
\[m(\{x \in S': \pi(x) \in\bar{D},\,s(v) \leq  x \cdot v \leq  s(v) + |t|^{-1}\}) \leq C|t|^{-\alpha} \tag{1.7}\]
Then by adding this estimate over all $j$, $(1.6)$ implies that $|I(tv)| \leq C'|t|^{-\alpha}$ for all $t$. If the estimate $(1.7)$ holds for all $t \neq 0 $ and all directions $v$ with $v_{n+1} \geq 0$,
 then we will similarly have a uniform estimate $|I(tv)| \leq C'|t|^{-\alpha}$  for all such $t$ and $v$. In this way  if the optimal $\alpha$ for which one has a uniform estimate $(1.7)$ is in the 
range $0 < \alpha < 1$, one retains this rate of decay in the surface measure Fourier transform $I(\lambda)$.  We record this fact in the following theorem.

\begin{theorem} Suppose we are in the setting of Theorem 1.2 and for some $C > 0$ and some $0 < \alpha < 1$ the estimate $(1.7)$ holds for all $t \neq 0$ and  all $v$ with $|v| = 1$ and
$v_{n+1} \geq 0$. Then there exists a constant $C'$ such that one has the estimate $|I(\lambda)| \leq C'|\lambda|^{-\alpha}$ for all $\lambda \neq 0$. 
\end{theorem}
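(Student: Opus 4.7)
The plan is to apply Theorem 1.2 to $I(tv)$ and then dyadically sum the resulting slab measures, using the hypothesis $(1.7)$ and the constraint $\alpha<1$ to produce a convergent geometric series.

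First, I would reduce to the case $v_{n+1}\geq 0$: since the integrand in $(1.5)$ is real and the exponent is purely imaginary, $I(-\lambda)=\overline{I(\lambda)}$, so it suffices to bound $|I(tv)|$ uniformly for $t>0$ and $|v|=1$, $v_{n+1}\geq 0$. For such $v$ and $t$, Theorem 1.2 gives
\[|I(tv)| \leq C\, m(\{x \in S': \pi(x)\in\bar{D},\, s(v)\leq x\cdot v\leq s(v)+t^{-1}\}) + C\sum_{j=1}^{\infty} 2^{-j} A_j(t,v),\]
where $A_j(t,v) = m(\{x\in S': \pi(x)\in\bar{D},\, s(v)+2^{j-1}t^{-1}\leq x\cdot v\leq s(v)+2^j t^{-1}\})$. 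The first term is exactly the quantity in $(1.7)$, so by hypothesis it is bounded by $Ct^{-\alpha}$.

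Next I would estimate each $A_j(t,v)$ by enlarging the slab: the annular slab is contained in the full slab $\{s(v)\leq x\cdot v\leq s(v)+2^j t^{-1}\}$ of width $2^j t^{-1}$, so applying $(1.7)$ with $t$ replaced by $2^{-j}t$ gives
\[A_j(t,v) \leq C\,(2^j t^{-1})^{\alpha} = C\, 2^{j\alpha}\, t^{-\alpha}.\]
Substituting into the sum yields
\[\sum_{j=1}^{\infty} 2^{-j} A_j(t,v) \leq C\, t^{-\alpha}\sum_{j=1}^{\infty} 2^{j(\alpha-1)},\]
and this geometric series converges precisely because $\alpha<1$. Combining the two contributions gives $|I(tv)|\leq C' t^{-\alpha}$ with $C'$ independent of $v$ and $t$. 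Writing an arbitrary $\lambda\neq 0$ as $\lambda=|\lambda|v$ (possibly after the sign flip above) yields the claimed bound $|I(\lambda)|\leq C'|\lambda|^{-\alpha}$.

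There is no real obstacle here beyond Theorem 1.2 itself; the only subtlety to check is that the hypothesis $(1.7)$, which controls slabs anchored at the minimum $s(v)$, can be rescaled to control slabs of any width $w>0$ by the bound $Cw^{\alpha}$ — which is immediate by setting $t=w^{-1}$ — and that the resulting exponent $2^{j(\alpha-1)}$ is summable. The restriction $\alpha<1$ is used in exactly one place, namely the convergence of this geometric series, which is consistent with the fact that without further information one cannot hope to extract decay faster than $|\lambda|^{-1}$ from Theorem 1.2 alone.
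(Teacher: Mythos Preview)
Your proposal is correct and follows essentially the same approach as the paper: apply Theorem 1.2, bound each dyadic slab by the larger slab of width $2^j|t|^{-1}$ using $(1.7)$ with $t$ replaced by $2^{-j}t$, and sum the resulting geometric series $\sum_j 2^{j(\alpha-1)}$, which converges since $\alpha<1$. Your write-up in fact supplies more detail than the paper's, which simply states that ``by adding this estimate over all $j$, $(1.6)$ implies that $|I(tv)| \leq C'|t|^{-\alpha}$.''
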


In the setting of Theorem 1.2, the surface may be flat to 
infinite order at some point $x$. In a direction $v$ perpendicular to $T_x(S)$, the rate of decay of the left hand side of $(1.6)$ may be slower than any $|t|^{-\alpha}$ for $\alpha > 0$. 
Nonetheless, in some such scenarios one may still sum the series in $(1.6)$ and obtain a result bounded by a constant times the first term in $(1.6)$. In such a way, one can still bound
$|\hat{\mu}(tv)|$ in terms of this first term, much as in Theorem 1.1.

Theorem 1.3 can be readily globalized when there are no $g_i(x)$ as follows.
 Suppose $S$ is a compact $C^2$ surface bounding a convex domain. Let $\nu$ denote Euclidean surface measure on $S$ and let  $\psi(x)$ be a $C^1$ function defined on a 
neighborhood of $S$ in $\R^{n+1}$. Then one can bound
$|\widehat{\psi \nu}(\lambda)|$ by using a partition of unity and then applying Theorem 1.3 to each term. Because one can get different $s(v)$ corresponding to each term, in order to have 
a reasonable theorem statement we proceed as follows. For a given direction $v$ and $t \neq 0$ we define $a(v,t)$ by
\[ a(v,t) = \sup_{s \in \R} m(\{x \in S: s  \leq  x \cdot v \leq  s + |t|^{-1}\}) \tag{1.8}\]
Thus $a(v,t)$ is the maximal value of the measure of intersection of $S$ with any "slab" perpendicular to $v$ of width $|t|^{-1}$. 
Note that with any  partition of unity, any term $m(\{x \in S': \pi(x) \in\bar{D},\,s(v) \leq  x \cdot v \leq  s(v) + |t|^{-1}\})$ showing up from an application of Theorem 1.3 will be bounded 
by $a(v,t)$. Thus if we have the  bound $a(v,t) \leq C|t|^{-\alpha}$ holding uniformly in $v$ for some $0 < \alpha < 1$, then the sum of all the terms in $(1.6)$ will similarly be bounded
 by some $C'|t|^{-\alpha}$. In other words, by adding over terms of the partition of unity, Theorem 1.3 implies that $|\widehat{\psi \nu}(\lambda)| \leq C'|\lambda|^{-\alpha}$ here. This
 leads to the following sharp result.

\begin{theorem} Let $S$ be a compact $C^2$ surface bounding a convex domain.  Let $\nu$ denote Euclidean surface measure on $S$ and let  $\psi(x)$ be a $C^1$ function defined on a 
neighborhood of $S$ in $\R^{n+1}$.  Suppose $0 < \alpha < 1$. 

\begin{enumerate} 

\item If there exists a $C > 0$ for which  $a(v,t) \leq C|t|^{-\alpha}$ for all directions $v$ and all $t \neq 0$, then 
there exists a  constant $C'$ depending on $S$, $\psi$, and $\alpha$ such that $|\widehat{\psi \nu}(\lambda)| \leq C'|\lambda|^{-\alpha}$ for all $\lambda \neq 0$.
\item Conversely, suppose $\psi(x) > 0$ for all $x \in S$. If there is a constant $C'$  such that $|\widehat{\psi \nu}(\lambda)| \leq C'|\lambda|^{-\alpha}$ for all $\lambda \neq 0$, then there exists a $C > 0$ depending on $S$ and $\alpha$ for which  $a(v,t) \leq C|t|^{-\alpha}$ for all directions $v$ and all $t \neq 0$.

\end{enumerate}
\end{theorem}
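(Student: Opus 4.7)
The plan is to prove the two implications separately, as they require different tools.

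For part (1), I would globalize Theorem 1.3 via a $C^1$ partition of unity. Choose finitely many rigid motions $A_1,\ldots,A_N$ and a partition of unity $\{\rho_k\}$ so that on the support of $\rho_k\psi$, after applying $A_k$, the surface $S$ is the graph of a convex $C^2$ function $f_k$ over a disk $D_k$. Writing $\widehat{\psi\nu}=\sum_k \widehat{\rho_k\psi\nu}$ and expressing each term as an $I_k(\lambda)$ of the form $(1.5)$ without any $g_i$'s (per the remark following (1.5)), I apply Theorem 1.3 to each piece. The hypothesis $(1.7)$ is immediate: rigid motions preserve surface measure and carry slabs to slabs, so the slab on the $k$-th piece is contained in a slab of width $|t|^{-1}$ on all of $S$ and hence has measure at most $a(A_k^{-1}v,t)\leq C|t|^{-\alpha}$ uniformly in $v$. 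Summing the finitely many resulting estimates yields $|\widehat{\psi\nu}(\lambda)|\leq C'|\lambda|^{-\alpha}$.

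For part (2), which is the main step, I would use a standard test-function argument exploiting the positivity of $\psi$. Fix a nonnegative Schwartz function $\eta:\R\to\R$ with $\eta\geq c_0>0$ on $[-1/2,1/2]$ and $\hat\eta$ supported in $[-1,1]$ -- for instance, a Fej\'er-type kernel $\eta(y)=c\bigl(\sin(y/2)/(y/2)\bigr)^2$, rescaled if necessary. For a unit vector $v$, $t\neq 0$, and $s\in\R$, set $s_0=s+(2|t|)^{-1}$ and consider
\[ J \;=\; \int_S \eta\bigl(t(x\cdot v - s_0)\bigr)\,\psi(x)\,d\nu(x). \]
For $x$ in the slab $\{s\leq x\cdot v\leq s+|t|^{-1}\}$, the argument of $\eta$ lies in $[-1/2,1/2]$, so with $\psi_{\min}:=\min_S\psi>0$ I obtain
\[ J \;\geq\; c_0\,\psi_{\min}\, m\bigl(\{x\in S:\,s\leq x\cdot v\leq s+|t|^{-1}\}\bigr). \]
On the other hand, expanding $\eta(y)=\tfrac{1}{2\pi}\int\hat\eta(\xi)e^{i\xi y}\,d\xi$, applying Fubini, and substituting $\sigma=\xi t$ give
\[ J \;=\; \frac{1}{2\pi|t|}\int_{|\sigma|\leq|t|}\hat\eta(\sigma/t)\,e^{-i\sigma s_0}\,\widehat{\psi\nu}(-\sigma v)\,d\sigma. \]
Combining the trivial bound $|\widehat{\psi\nu}|\leq\|\psi\nu\|_1$ on $|\sigma|\leq 1$ with the hypothesis $|\widehat{\psi\nu}(-\sigma v)|\leq C'|\sigma|^{-\alpha}$ on $1<|\sigma|\leq|t|$, and using $\alpha<1$ to see that $\int_1^{|t|}\sigma^{-\alpha}d\sigma \lesssim |t|^{1-\alpha}$, I obtain $|J|\leq C''|t|^{-\alpha}$ for $|t|\geq 1$; the case $|t|<1$ is trivial since $a(v,t)$ is bounded by the total mass of $\nu$. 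Taking the supremum over $s$ then gives $a(v,t)\leq C|t|^{-\alpha}$.

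The main obstacle is the balance in part (2) between the two requirements on $\eta$: a positive lower bound on a fixed neighborhood of zero (to detect the slab measure) and compact Fourier support (so the Fourier integral is truncated at $|\sigma|\leq|t|$). The hypothesis $\psi>0$ is essential to prevent cancellation in $J$, and the restriction $\alpha<1$ is sharp here: for $\alpha\geq 1$ the $\sigma$-integral would only give at best logarithmic decay, destroying the sharp $|t|^{-\alpha}$ behavior -- which also explains the stated range of $\alpha$ in the theorem.
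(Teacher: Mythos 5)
Your proposal is correct. Part (1) is the paper's argument: a finite $C^1$ partition of unity reducing to graphs, Theorem~1.3 applied on each piece, and the observation that $a(v,t)\leq C|t|^{-\alpha}$ supplies the hypothesis $(1.7)$ uniformly over pieces (since $a(v,t)$ is a supremum over all slab positions $s$, it dominates the piecewise slab measures regardless of where each piece achieves its $s(v)$).

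For part (2) your route is a little different in execution though the same in spirit. The paper cites Lemma~1.5 (the sublevel-set estimate of [Gr]/[CaCWr]) and applies it to the one-dimensional pushforward measure $\mu(-\infty,y)=\int_{\{x\in S:\,x\cdot v\leq y\}}\psi\,d\nu$, whose Fourier transform is $\widehat{\psi\nu}(\lambda v)$. Your Fej\'er-kernel test-function argument is in effect a self-contained reproof of exactly that lemma specialized to $Y=\R$, $f(y)=y$, and $\delta=\alpha<1$: the lower bound on $J$ plays the role of the sublevel-set measure, the Fourier inversion plus compact support of $\hat\eta$ reproduces the truncation $|\lambda|\leq 1/\epsilon$, and the convergence of $\int_1^{|t|}\sigma^{-\alpha}\,d\sigma$ is the same place the restriction $\alpha<1$ enters Lemma~1.5. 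Your version has the merit of being elementary and of exhibiting why $\alpha<1$ is needed and how $\psi>0$ enters; the paper's version defers the analysis to a general-purpose lemma, which keeps the proof short and makes clear the argument is not specific to this geometric setting. One tiny inaccuracy you might clean up: a nonnegative function whose Fourier transform is compactly supported need not be Schwartz (the Fej\'er kernel $(\sin(y/2)/(y/2))^2$ you name is not), but this is immaterial --- decay like $y^{-2}$ is more than enough for the Fubini step, and one can also take $\eta=|\check\varphi|^2$ for a smooth compactly supported $\varphi$ if a genuine Schwartz choice is preferred.
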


\begin{proof} Part 1 follows from Theorem 1.3 as described above. To prove part 2, we use a well-known tool in the field that converts oscillatory
 integral estimates into sublevel set estimates. We use the following formulation from [Gr], which originates in  [CaCWr].

\begin{lemma}
Let $(Y,\mu)$ be any finite measure space, $0<\delta<1$ and $f:Y\rightarrow\mathbb{R}$ a measurable function such that for all real non-zero $\lambda$, we have
\[\left|\int_Ye^{i\lambda f(y)}\,d\mu(y)\right|\leq A|\lambda|^{-\delta} \tag{1.9}\]
Then for each $c\in\mathbb{R}$ and $\epsilon > 0$ we have
\[\mu(\{y\in Y:|f(y)-c|\leq\varepsilon\})\leq C_\delta A\varepsilon^\delta \tag{1.10}\]
where $C_\delta$ depends only on $\delta$.
\end{lemma}
To prove part 2 of Theorem 1.4, we apply Lemma 1.5 with $Y= \R$, $f(y) = y$, $\delta = \alpha$, and 
$\mu(-\infty, y) = {\displaystyle \int_{\{x \,\in\, S:\, x\cdot v \,\leq\, y\}} \psi(x)\, d\nu}$. In this situation, the oscillatory integral of $(1.9)$ is equal to $\widehat{\psi \nu}(\lambda v)$.
 If the estimate $(1.9)$ holds for all $v$ and $\lambda$ then by Lemma 1.5 one also has an estimate $a(v,t) \leq C|t|^{-\alpha}$ for all $v$ and $t$. This completes the proof of the theorem.

\end{proof}

It is worth pointing out that by using Lemma 1.5 similarly to the above proof, one has that a Fourier transform decay rate exponent  provided by the main result of [BNW] is also sharp when
it is less than 1. In many situations it will be sharp even when greater than or equal to 1, but this would have to be shown differently.

Quantities related to $a(v,t)$ have appeared in other results connecting oscillatory integrals to sublevel set measures, such as the recent paper
[BaGuZhZo]. In addition, conditions somewhat resembling those of Theorems 1.3 and 1.4 have appeared in related contexts, such as [G2] and [ISa].
There also has been quite a bit of other work on Fourier transforms of surface measures of surfaces that are locally graphs of convex functions. We mention 
 [BakMVW] [BrHoI] [CoDiMaMu] [Gr] [Gre] as examples especially pertinent to the topic of this paper. We also mention the book [IL] which contains many results in this area and further references. 

\section{Lattice point discrepancy.} 

Suppose $S$ is a compact  $C^2$ surface bounding a convex domain $S_0$ containing the origin. For $k > 1$, let $kS_0$ denote the dilated surface $\{kx: x \in S\}$, and let
 $N(k)$ denote the number of lattice points on or inside $kS_0$. Then one has $N(k) \sim k^{n+1}m(S_0)$ for large $k$, and a straightforward geometric argument gives that for some constant
$C$ one also has an estimate $|N(k) -  k^{n+1}m(S_0)| \leq Ck^n$. If $S_0$ were a polyhedron instead of having $C^2$ boundary, then it is not hard to show that the flatness of the
sides of $S_0$ ensures that the exponent $n$ in $Ck^n$ is best possible.

It turns out that when $S$ is curved enough for the conditions of part 1 of Theorem 1.4 to hold for some $\alpha > 0$ then one gets a better exponent than $n$. Specifically, there are 
well known methods (see 
p.383-384 of [ShS]) that give the following. Suppose $A$ is a compact $C^2$ surface in $\R^{n+1}$ bounding an open set $V$ such that there exist positive constants $c$ and $\epsilon_0$
such that if $\epsilon < \epsilon_0$, whenever $x \in V$ and $|y| < \epsilon$ one has $x + y \in (1 + c\epsilon)V$. Then if the Euclidean surface measure on $A$, which we denote by $\rho$,
satisfies $|\hat{\rho}(\xi)| \leq C|\xi|^{-\beta}$ for some $\beta, C > 0$, then the lattice point discrepancy corresponding to $V$ satisfies $|N(k) - k^{n+1}m(V)| \leq
C'k^{n - {\beta \over n + 1 - \beta}}$ for some constant $C'$.

When the conditions of part 1 of Theorem 1.4 hold, the surfaces $S$ at hand satisfy the above conditions with $\beta = \alpha$; the fact that $S_0$ is convex with $C^2$ boundary
 ensures that the requisite condition on $S_0$ holds. Thus Theorem 1.4 immediately leads to the following consequence.

\begin{theorem}
Suppose $S$ is a compact  $C^2$ surface bounding a convex domain $S_0$ containing the origin.  Suppose for some $0 < \alpha < 1$ and some $C > 0$ we have $|a(v,t)| \leq C|t|^{-\alpha}$
 for all directions $v$ and all $t \neq 0$. Then for some constant $C'$ one has the lattice point discrepancy bound $|N(k) -  k^{n+1}m(S_0)| \leq C'k^{n - {\alpha \over n + 1 - \alpha}}$
\end{theorem}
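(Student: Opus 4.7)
The plan is to chain Theorem 1.4(1) with the Shakarchi--Stein lattice-point-discrepancy machinery recalled in the paragraph just before the statement; there is nothing new to prove beyond verifying that each hypothesis is in force. First I apply Theorem 1.4(1) with $\psi\equiv 1$ (so that we are looking at pure Euclidean surface measure $\nu$ on $S$). The hypothesis on $a(v,t)$ in Theorem 2.1 is exactly the hypothesis of Theorem 1.4(1), so we obtain $|\hat{\nu}(\xi)|\leq C_1|\xi|^{-\alpha}$ for all nonzero $\xi$. This is the Fourier-decay input, with $\beta=\alpha$, that the [ShS] result needs.

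The one thing requiring a brief argument is the geometric ``inflation'' hypothesis on $S_0$ demanded by [ShS]: namely, that there exist $c,\epsilon_0>0$ such that $x\in S_0$ and $|y|<\epsilon<\epsilon_0$ imply $x+y\in (1+c\epsilon)S_0$. Since $0\in S_0$ and $S_0$ is open, I fix $r_0>0$ with $B(0,r_0)\subset S_0$. Convexity of $S_0$ together with $0\in S_0$ yields the Minkowski identity $(1+c\epsilon)S_0 = S_0 + c\epsilon\,S_0$, and so
\[
(1+c\epsilon)S_0 \;\supset\; S_0 + c\epsilon\,B(0,r_0) \;=\; S_0 + B(0, c\epsilon r_0).
\]
Choosing $c = 1/r_0$, any $x\in S_0$ with $|y|<\epsilon$ satisfies $x+y\in S_0+B(0,\epsilon)\subset (1+c\epsilon)S_0$, as required. (The $C^2$ assumption on the boundary is not actually needed for this particular inflation step, but it is part of the standing hypotheses of [ShS] elsewhere and holds here by assumption.)

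With the Fourier decay from the first step and the inflation condition from the second step, invoking the [ShS] theorem quoted before Theorem 2.1 directly with $\beta=\alpha$ gives
\[
|N(k)-k^{n+1}m(S_0)| \;\leq\; C'\,k^{\,n - \alpha/(n+1-\alpha)},
\]
which is the claim. The main (and only) obstacle is the Minkowski-sum verification in the middle paragraph; everything else is a black-box citation, so I would not expect any technical difficulty.
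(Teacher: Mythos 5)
Your proposal is correct and follows essentially the same route as the paper: apply Theorem 1.4(1) with $\psi\equiv 1$ to get the Fourier decay bound, verify the inflation hypothesis of [ShS] from convexity, and cite the [ShS] lattice-point result. The only difference is that you spell out the Minkowski-sum verification of the inflation condition (which the paper merely asserts follows from convexity and $C^2$ boundary), and your argument correctly shows that convexity together with $0$ in the interior already suffices.
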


Finding the optimal exponent for the lattice discrepancy for a given domain can be very difficult in general. The famous unsolved Gauss circle problem is to show that for the disk in two
dimensions one has the bound  $N(k) - \pi  k^2 = O(k^{\frac{1}{2} + \epsilon})$ for any $\epsilon > 0$; it was shown by Hardy in [H1] [H2] that one does not have an 
$O(k^{1 \over 2})$ bound. The surface Fourier transform method used above gives an exponent of $\frac{2}{3}$. The current best known exponent is $.628966...$, due to 
Li and Yang [LiY]. 

For spheres, the problem becomes less difficult as the dimension increases. While the best possible exponent is unknown for spheres in three dimensions, if $n \geq 3$ one can show that
$N(k) -  k^{n+1}m(V) = O(s^{n-1})$ and the exponent $n-1$ is best possible. We refer to [Kr] for more information about lattice point discrepancy for spheres.

There has been a lot of work on lattice point discrepancy problems for more general convex domains. If the boundary of the domain $V$ has nonzero Gaussian curvature and is sufficiently
smooth, then the Fourier transform method described above gives the bound $N(k) - k^{n+1} m(V) = O(k^{n - 1 + {2 \over n + 2}})$, as was proven by Hlawka [Hl1] [Hl2]. Various 
improvements have been made to this bound over time. There have also been various papers where the Gaussian curvature condition or the smoothness condition has been relaxed, such as the
references [BrIT] [ISaSe1] [ISaSe2] [ISaSe3] [R1] [R2]. These papers use harmonic analysis techniques in the proofs. 

\section{Some examples.}

\noindent {\bf Example 1}. 

We consider conic surfaces given by equations $f(\frac{x_1}{x_{n+1}}, \frac{x_2}{x_{n+1}},...,
\frac{x_n}{x_{n+1}}) = 1$, where the points $(x_1,...,x_n)$ for which $f(x_1,...,x_n) = 1$ form a compact real analytic surface enclosing a convex domain (this surface will necessarily be of
 finite line type). Note that such conic surfaces are not of finite line type. 
While one can analyze this example directly, it is also a good illustration of the statements of Theorems 1.1 and 1.2.  Let $\rho$ be the Euclidean 
surface measure corresponding to such a surface, and let $\psi(x)$ be a $C^{n + 1}$ function on $\R^{n+1}$ supported in the set $1 < x_{n+1} < 2$. Then the measure
$\psi(x)\rho$ falls under the hypotheses of Theorem 1.1; one can use a partition of unity to reduce the situation to pieces which, after a suitable translation and rotation, become of the 
form $(1.1)$. 

We examine what Theorem 1.1 says in this situation. If $v$ makes an angle less than say $\frac{\pi}{8}$ with the $x_{n+1}$ axis, one can get arbitrarily fast decay rate for the integral $(1.1)$ 
simply by repeated integrations by parts in directions parallel to $S$. On the other hand, if $v$ makes an angle $\frac{\pi}{8}$ or more with the $x_{n+1}$ axis, one may look at a slab
given by Theorem 1.1 of width $|t|^{-1}$  perpendicular to the $v$ direction, and that will give a rate of decay in the $v$ direction. If $v$ is perpendicular to the tangent plane to 
the cone, one may get a decay rate as fast as $C|\lambda|^{-{n - 1 \over 2}}$, which occurs when the Hessian determinant is of maximal rank $n-1$.

Next, we examine the consequences of Theorem 1.2 for this example. Suppose once again
$v$ makes an angle greater than or equal to $\frac{\pi}{8}$ with the $x_{n+1}$ axis. We look at the intersection of the $n+1$-dimensional  slab $\{x \in S: s  \leq  x \cdot v \leq  s + |t|^{-1}\}$ 
with the cross section of $S$ at a given height $x_{n+1} = c$, which is the  intersection of an $n$-dimensional slab of $\{x: x_{n+1} = c\}$
 with this cross section of $S$, again  with width comparable to  $|t|^{-1}$. Thus if one wants an overall decay rate of the 
form $|\widehat{\psi(x)\rho}(\lambda)| \leq C|\lambda|^{-\alpha}$ for some $0 < \alpha < 1$, one can seek an $\alpha$ for which $a(v,t) \leq C|t|^{-\alpha}$  holds for all $t$ and $v$ for 
the surface $f(x_1,...,x_n) = 1$, since 
the same exponent will work for each surface $f({x_1 \over c},...,{x_n \over c}) = 1$. Then by Theorem 1.2 one will get $|\widehat{\psi(x)\rho}(tv)| \leq C|t|^{-\alpha}$ for all 
$t$ and all $v$ with $|v| = 1$  making an angle
$\frac{\pi}{8}$ or more with the $x_n$ axis. Combining with the case of the angle being less than $\frac{\pi}{8}$, we see that overall one has a Fourier transform decay rate
of $|\widehat{\psi(x)\rho}(\lambda)| \leq C|\lambda|^{-\alpha}$.

\noindent {\bf Example 2.} 

Let $S_1,...,S_k$ be distinct  compact  real analytic surfaces enclosing convex regions $V_1,...,V_k$ respectively. Let
 $V = \cup_{i=1}^k V_i$ and let $S$ be
the boundary of $V$. Then $S$  the finite union of pieces of different $S_i$, each of which falls under Theorems 1.2 and 1.3. Let $\sigma$ be the Euclidean surface measure 
on $S$ and $\sigma_i$ the Euclidean surface measure on $S_i$. Suppose $\alpha > 0$ is such
that for each $i$, Theorem 1.3 gives a bound of $\hat{\sigma_i}(\lambda) \leq C'|\lambda|^{-\alpha}$. Then Theorem 1.3 implies one has the same bound of $C'|\lambda|^{-\alpha}$
for the Fourier transform of the surface measure of the piece of $S$ deriving from $S_i$. Adding this up over all $i$, we see that $|\hat{\sigma}(\lambda)| \leq C|\lambda|^{-\alpha}$ as well. 

Note that the above argument will also work for an $S$ that is the boundary of a finite intersections of $V_i$, or more generally finite unions of finite intersections of such $V_i$. 
One can sometimes improve the estimates obtained by examining the piece of $S_i$ occurring in a given $S$; one only needs the best exponent for a neighborhood in $S_i$ of this piece.

\noindent {\bf Example 3.} 

Suppose $S$ is any $C^2$ curve in $\R^2$ enclosing a bounded convex region, and let $\nu$ denote its surface measure.
 Let $P(x_1)$ denote the phase function appearing in $(1.5)$, viewed as a function of $x_1$. Since $| P'(x_1)|$ is nondecreasing as $x_1$ moves away
 from a point $x_1^*$ with $P'(x_1^*) = 0$, when such an $x_1^*$ exists the terms in the sum of $(1.6)$ are nonincreasing in $j$ and bounded by the term outside the sum. 

Given a direction $v$, let $ s(v) = \min\{x \cdot v: x \in S \}$ and $ s^*(v) = \max\{x \cdot v: x \in S \}$. In view of the above, by applying Theorem 1.2 near $x$ for which 
$s(v)$ and $s^*(v)$ are achieved, one obtains the following estimate.
\[ |\hat{\nu}(tv)| \leq C\ln (2 + |t|)\, m(\{x \in S: s(v) \leq x \cdot v \leq s(v) + |t|^{-1}\}) \]
\[+\, C\ln (2 + |t|) \,m(\{x \in S: s^*(v) - |t|^{-1} \leq x \cdot v \leq s^*(v)\}) \tag{3.1} \] 
It turns out that if $S$ is a $C^{\infty}$ curve one can remove the $\ln (2 + |t|)$ factors in $(3.1)$. This was shown in [BNW].

\section{Proofs of Theorems 1.1 and 1.2.}

\subsection{Some lemmas used in the proof of Theorem 1.1.}

In this section, we will make use of a couple of results from [G1] about real analytic functions (which are closely related to some results in [Mi]). They are as follows.

\begin{lemma}(Theorem 2.1 of [G1].) Suppose $g(x_1,...,x_n)$ is a real analytic function defined on a neighborhood of the origin, not identically zero. Then there is an $n-1$-dimensional ball
 $B_{n-1}(0,\eta)$ and a $k \geq 0$ such that for each $(x_1,...,x_{n-1})$ in $B_{n-1}(0,\eta)$ either $g(x_1,...,x_n) = 0$ for all $|x_n| < \eta$ or
  there is a $0 \leq l \leq k$, which may  depend on $(x_1,...,x_{n-1})$, such that  for all $|x_n| < \eta $ one has
\[0 <  {1 \over 2} |\partial_{x_n}^l g(x_1,...,x_{n-1},0)| <  |\partial_{x_n}^l g(x_1,...,x_n)| <  2|\partial_{x_n}^l g(x_1,...,x_{n-1},0)|  \tag{4.1}\]
 The set of $(x_1,...,x_{n-1})$ for which $g(x_1,...,x_n) = 0$ for all $|x_n| < \eta$ has measure zero.
\end{lemma}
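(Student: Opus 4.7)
The plan is to combine a Noetherian argument on the Taylor coefficients of $g$ in the variable $x_n$ with Cauchy-type estimates to control the remainder. First I would expand
\[ g(x',x_n) = \sum_{j=0}^{\infty} a_j(x')\, x_n^j, \qquad x' = (x_1,\ldots,x_{n-1}), \]
where each $a_j(x') = \frac{1}{j!}\partial_{x_n}^{j} g(x',0)$ is real analytic on a neighborhood of $0 \in \R^{n-1}$. Consider the decreasing sequence of real analytic sets $Z_l = \{x' : a_0(x') = \cdots = a_{l-1}(x') = 0\}$. The associated increasing chain of ideals in the Noetherian local ring of convergent power series at the origin must stabilize, so there exists $k$ such that $Z_l = Z_\infty := \bigcap_j \{a_j = 0\}$ for every $l \geq k$, after possibly shrinking to a ball $B_{n-1}(0,\eta_1)$. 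Since the Taylor series converges uniformly on $\bar B_{n-1}(0,\eta_1) \times [-\eta_1,\eta_1]$, for every $x' \in Z_\infty$ we have $g(x',x_n) \equiv 0$ on $|x_n| < \eta_1$, and for $x' \notin Z_\infty$ there is some $l \leq k-1$ with $a_l(x') \neq 0$. The measure-zero assertion follows because $Z_\infty$ is contained in the zero set of the nonzero analytic function $a_{l_0}$ for any $l_0$ with $a_{l_0} \not\equiv 0$.

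The delicate step is producing the factor-of-two comparison (4.1) with a \emph{uniform} $\eta$. My strategy is to choose $l(x')$ to be the \emph{largest} index $l \leq k$ with $a_l(x') \neq 0$. With this choice $a_{l+1}(x') = \cdots = a_k(x') = 0$, so
\[ \partial_{x_n}^{l} g(x',x_n) - l!\, a_l(x') \;=\; \sum_{j \geq 1} \frac{(l+j)!}{j!}\, a_{l+j}(x')\, x_n^j \;=\; \sum_{j \geq k+1-l}\frac{(l+j)!}{j!}\, a_{l+j}(x')\, x_n^j, \]
so the first potentially nonzero term has index $\geq k+1$. Using Cauchy estimates $|a_j(x')| \leq M/R^j$ valid uniformly on a smaller ball (from analyticity of $g$ on a complex neighborhood), the tail is bounded by a constant multiple of $(\eta/R)^{k+1-l}$ independently of $x'$, and hence can be made arbitrarily small by taking $\eta$ small.

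The main obstacle lies in comparing this uniform tail bound with $|l!\,a_l(x')|$, which may tend to $0$ as $x'$ approaches $Z_\infty$. To resolve this, I would use the Noetherian property more strongly: because the chain stabilizes at $k$, each $a_{k+i}$ lies in the ideal generated by $a_0,\ldots,a_{k-1}$, so there exist real analytic functions $b_{i,m}(x')$ (defined on a possibly smaller ball) with
\[ a_{k+i}(x') \;=\; \sum_{m=0}^{k-1} b_{i,m}(x')\, a_m(x'). \]
Combined with the maximality of $l(x')$ (which forces $a_m(x') = 0$ for $l < m \leq k-1$), this expresses the tail as a sum whose size scales with $|a_l(x')|$ itself, giving the desired uniform comparison once $\eta$ is sufficiently small. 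If this direct division argument proves insufficient in some strata, I would fall back on a finite stratification of $B_{n-1}(0,\eta_1)$ into analytic pieces, inducting on the dimension of the stratum, using on each stratum a fixed choice of $l$ and the compactness of its closure together with a Weierstrass-preparation-style normalization. The technical core of the proof is therefore the uniform tail-versus-leading-coefficient estimate near the exceptional set $Z_\infty$; everything else is bookkeeping around the Noetherian stabilization and Cauchy bounds.
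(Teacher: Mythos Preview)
The paper does not prove Lemma 4.1 at all: it is quoted verbatim as ``Theorem 2.1 of [G1]'' and simply used as a black box. So there is no ``paper's own proof'' to compare against; the relevant question is whether your sketch actually establishes the statement.

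Your primary argument has a genuine gap at the step you yourself flag as delicate. After writing $a_{k+i}=\sum_{m=0}^{k-1} b_{i,m}\,a_m$ and using maximality of $l$ to kill the terms with $l<m\le k-1$, you obtain
\[
a_{k+i}(x')=\sum_{m=0}^{l} b_{i,m}(x')\,a_m(x'),
\]
and then assert that this makes the tail ``scale with $|a_l(x')|$ itself''. That conclusion is unwarranted: the right-hand side is controlled by $\max_{0\le m\le l}|a_m(x')|$, not by $|a_l(x')|$, and nothing in your setup forces $|a_m(x')|\lesssim |a_l(x')|$ for $m<l$. A concrete counterexample is
\[
g(x_1,x_2,x_n)=x_1+x_2\,x_n+x_1\,x_n^{N}
\]
with $N$ large. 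Here $a_0=x_1$, $a_1=x_2$, $a_N=x_1$, all other $a_j=0$; the ideal chain stabilizes at $k=2$. Whenever $x_2\neq 0$ your rule gives $l=1$, yet
\[
\partial_{x_n} g(x',x_n)=x_2+N\,x_1\,x_n^{N-1}
\]
cannot satisfy (4.1) with any $\eta$ independent of $x'$: along $x_1=1$, $x_2=\varepsilon\to 0$ one needs $|x_n|^{N-1}\lesssim \varepsilon$. The lemma is still true for this $g$, but with $k=N$ and the choice $l=N$ whenever $x_1\neq 0$; your stabilization index $k=2$ is simply too small, and ``largest $l\le k$'' is the wrong selector.

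There is a second, separate gap: even if the qualitative ideal membership $a_{k+i}\in(a_0,\dots,a_{k-1})$ were enough, you never control the size of the coefficients $b_{i,m}$ uniformly in $i$. Without such control (say via a quantitative Weierstrass division bound) the tail series $\sum_{j\ge k+1-l}\frac{(l+j)!}{j!}a_{l+j}(x')x_n^{j}$ cannot be summed against $|a_l(x')|$.

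Your fallback (``finite stratification \dots\ Weierstrass-preparation-style normalization'') is in the right spirit but is not yet an argument; as written it is a pointer to a different proof rather than a repair of this one.
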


\begin{lemma}(Corollary 2.1.2 of [G1].) Suppose $f_1(x_1,...,x_n)$,...,$f_l(x_1,...,x_n)$ are real analytic functions on a neighborhood of the origin, none identically zero. Then there
is an $n-1$ dimensional ball $B_{n-1}(0,\eta)$ and a positive integer $p$ such that for each $s_1,...,s_l$ and each $(x_1,...,x_{n-1}) \in B_{n-1}(0,\eta)$,
 the set $\{x_n: |x_n| < \eta $ and $f_i(x_1,...,x_n) < s_i$ for each $i\}$ consists of at most $p$ intervals.
\end{lemma}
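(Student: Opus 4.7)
The plan is to reduce Lemma 4.2 to a uniform upper bound on the number of zeros of each $f_i - s_i$ along vertical slices in the $x_n$-direction, and then invoke a simple combinatorial bound for intersections of finite unions of intervals.

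First, for each index $i$ such that $\partial_{x_n} f_i$ is not identically zero, I would apply Lemma 4.1 to $\partial_{x_n} f_i$, obtaining a radius $\eta_i > 0$ and an integer $k_i \geq 0$. (Indices for which $\partial_{x_n} f_i \equiv 0$ correspond to $f_i$ independent of $x_n$, so $\{|x_n| < \eta : f_i < s_i\}$ is either all of $(-\eta,\eta)$ or empty and can be dismissed.) Set $\eta = \min_i \eta_i$. For every $(x_1, \ldots, x_{n-1}) \in B_{n-1}(0,\eta)$ and every remaining $i$, Lemma 4.1 provides one of two alternatives: either $\partial_{x_n} f_i$ vanishes on the entire slice (so $f_i$ is constant there and the associated sublevel set is a single interval or empty), or there exists $0 \leq m_i \leq k_i$ such that $\partial_{x_n}^{m_i + 1} f_i$ has constant nonzero sign on the slice.

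The key point in the second alternative is that $\partial_{x_n}^{m_i + 1}(f_i - s_i) = \partial_{x_n}^{m_i + 1} f_i$ independent of $s_i$, so the same sign control transfers to $f_i - s_i$. Iterating Rolle's theorem $m_i + 1$ times then bounds the number of zeros of $f_i - s_i$ on the slice by $m_i + 1 \leq k_i + 1$. Consequently the set $A_i = \{|x_n| < \eta : f_i(x_1, \ldots, x_n) < s_i\}$ is a union of at most $k_i + 2$ intervals, with a bound depending only on $f_i$, not on $s_i$ or on $(x_1, \ldots, x_{n-1})$.

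To finish, I would observe that the intersection $\bigcap_i A_i$ can only change membership at boundary points of the individual $A_i$, of which there are at most $2 \sum_i (k_i + 2)$ in total. Hence $\bigcap_i A_i$ is a union of at most $\sum_i (k_i + 2) + 1$ intervals, and I would take $p$ to be this quantity.

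The main subtlety I anticipate is recognizing that Lemma 4.1 must be applied to $\partial_{x_n} f_i$ rather than to $f_i$ itself: applying it directly to $f_i$ might produce the exponent $l = 0$, which only says $f_i$ has constant sign and does not control the zeros of $f_i - s_i$ for arbitrary $s_i$. The measure-zero exceptional slices from Lemma 4.1 cause no real difficulty because they produce trivial sublevel sets.
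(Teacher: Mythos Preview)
The paper does not prove this lemma; it is simply quoted from [G1] as a corollary of what appears here as Lemma~4.1. Your derivation from Lemma~4.1 is correct and is the natural route suggested by the label ``Corollary'': apply Lemma~4.1 to each $\partial_{x_n} f_i$ to obtain, on every slice, either the trivial alternative or some order $m_i+1\le k_i+1$ at which the $x_n$-derivative has fixed nonzero sign; Rolle's theorem then bounds the zeros of $f_i-s_i$ on the slice by $k_i+1$ uniformly in $s_i$, so each sublevel set has boundedly many components, and hence so does the intersection. Your remark that Lemma~4.1 must be applied to $\partial_{x_n} f_i$ rather than to $f_i$ itself (to avoid the useless outcome $l=0$) is exactly the point, and your handling of the degenerate slices where $\partial_{x_n} f_i$ vanishes identically is correct. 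The precise constant you give for $p$ is immaterial since the lemma only asserts existence of some uniform bound.
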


 Lemma 4.1 will be used in the proof of Theorem 1.1 in conjunction with the following result from [PS].

\begin{lemma} (Lemma 1 of [PS].) Suppose $F \in C^N[\alpha,\beta]$, with $N \geq 1$, such that for a constant $C\geq 0$ one has
\[\sup_{\alpha \leq x \leq \beta} |F^{(N)}(x)| \leq C\inf_{\alpha \leq x \leq \beta} |F^{(N)}(x)| \tag{4.2}\] 
Then if $I$ is any subinterval of $[\alpha,\beta]$ of length $\delta$ and $I^*$ denotes its concentric double (as a subset of $[\alpha,\beta]$), we have 
\[\sup_{x\in I^*} |F(x)| \leq a \sup_{x\in I} |F(x)|\tag{4.3a}\]
\[\sup_{x\in I^*} |F'(x)| \leq a\delta^{-1}\sup_{x\in I} |F(x)|\tag{4.3b}\]
Here the constant $a$ depends only on $N$ and $C$. 
\end{lemma}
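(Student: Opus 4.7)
The key observation is that condition (4.2) bounds the multiplicative oscillation of $|F^{(N)}|$, so $F^{(N)}$ has constant sign and is roughly constant; consequently $F$ behaves on $[\alpha,\beta]$ essentially like a polynomial of degree $N$. For actual polynomials of degree at most $N$, inequalities of the form (4.3a) and (4.3b) are standard Markov-type bounds: on the reference interval $[-1,1]$ the $(N+1)$-dimensional space of such polynomials has all norms equivalent and differentiation is a bounded operator, and both facts rescale to any interval of length $\delta$. The plan is therefore to approximate $F$ by a polynomial of degree $N$ on $I$, transport the polynomial inequality to $I^*$, and control the remainder using (4.2).

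First I would dispense with the degenerate case: if $\inf_{[\alpha,\beta]}|F^{(N)}|=0$, then (4.2) forces $F^{(N)}\equiv 0$, so $F$ is already a polynomial of degree less than $N$ and the conclusion is immediate from the polynomial Markov inequality. Otherwise $F^{(N)}$ has constant sign, and I set $m=\inf|F^{(N)}|$, $M=\sup|F^{(N)}|\le Cm$. Next I would let $P$ be the polynomial of degree $N$ interpolating $F$ at $N+1$ equally spaced nodes in $I$; the Lagrange formula gives $\sup_I|P|\le c_N\sup_I|F|$, and the polynomial Markov inequality then yields $\sup_{I^*}|P|\le c_N'\sup_I|F|$ and $\sup_{I^*}|P'|\le c_N'\delta^{-1}\sup_I|F|$. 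The task reduces to bounding $\sup_{I^*}|F-P|$ and $\sup_{I^*}|(F-P)'|$ by constants times $\sup_I|F|$ and $\delta^{-1}\sup_I|F|$, respectively.

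To bound the remainder, note that $F-P$ vanishes at the $N+1$ interpolation nodes, so iterated Rolle produces a point $\xi\in I$ with $(F-P)^{(N)}(\xi)=0$. Since $P^{(N)}$ is a constant, this forces $P^{(N)}=F^{(N)}(\xi)\in[m,M]$, and (4.2) yields the uniform pointwise bound
\[ |(F-P)^{(N)}(y)|=|F^{(N)}(y)-F^{(N)}(\xi)|\le M-m\le (C-1)m \quad (y\in[\alpha,\beta]). \]
Integrating $(F-P)^{(N)}$ successively from the interpolation zeros, each integration picking up at most a factor equal to the length of $I^*$, produces $\sup_{I^*}|F-P|\le c_N''(C-1)m\,\delta^N$ and analogously $\sup_{I^*}|(F-P)'|\le c_N''(C-1)m\,\delta^{N-1}$. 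On the other hand, $P$ is a polynomial of degree at most $N$ with $P^{(N)}\in[m,M]$, and the inverse polynomial Markov estimate gives $|P^{(N)}|\delta^N\le c_N'''\sup_I|P|\le c_N''''\sup_I|F|$, so $m\delta^N\le c_N''''\sup_I|F|$. Combining with the bounds on $P$ yields (4.3a) and (4.3b).

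The main obstacle I anticipate is organizing the remainder step so that the $N$-fold integration of $(F-P)^{(N)}$, starting from zeros that lie only inside $I$, still produces a clean $\delta^N$ uniformly on $I^*$; the distances from points of $I^*\setminus I$ to the nearest interpolation zero are only a bounded factor larger than in $I$, which affects the constant $c_N''$, but the bookkeeping must be done carefully to keep the dependence on $C$ tracked through the $(C-1)$ factor. An alternative route, which may be technically cleaner, is to proceed by induction on $N$: the case $N=1$ is a direct mean-value-theorem argument (sign-constancy of $F'$ makes $F$ monotone, and (4.2) comparably bounds $|F'|$ by $\sup_I|F|/\delta$), and for the inductive step one applies the inequality for $N-1$ to $F'$ on a slightly enlarged interval and then integrates.
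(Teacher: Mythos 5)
The paper does not prove this lemma at all --- it is imported verbatim as Lemma~1 of [PS] (Phong--Stein), so there is no internal proof for your argument to be compared against. Your proof is, however, a correct and self-contained derivation, and it is worth recording that it works. The structure is: handle the degenerate case $\inf|F^{(N)}|=0$; otherwise replace $F$ by its Lagrange interpolant $P$ of degree $\le N$ at $N+1$ nodes in $I$; control $P$ on $I^*$ by norm equivalence on the $(N{+}1)$-dimensional polynomial space (what you loosely call ``Markov''); control the remainder $R=F-P$ by Rolle's theorem plus the oscillation bound $|R^{(N)}|\le M-m\le(C-1)m$ that (4.2) gives; and close the loop by the ``inverse Markov'' bound $m\,\delta^N\le c_N\sup_I|P|\le c_N'\sup_I|F|$ on the leading coefficient. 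The $N$-fold integration from the Rolle zeros (which lie in $I$) to points of $I^*$ costs at most a factor $2\delta$ per step, so the $2^N$ bookkeeping you worry about is harmless, and the final constant $a$ depends only on $N$ and $C$ as required.

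Two small points. First, a terminology quibble: the estimates $\sup_{I^*}|P|\le c_N\sup_I|P|$ and $|P^{(N)}|\delta^N\le c_N\sup_I|P|$ are not literally Markov's inequality; they are finite-dimensional norm-equivalence statements (or Chebyshev-type bounds) that rescale from $[-1,1]$ to $I$. Only the derivative bound $\sup_{I^*}|P'|\le c_N\delta^{-1}\sup_{I^*}|P|$ is genuinely Markov. The facts are all standard and correct, but it is worth keeping the names straight since the constants have different origins. Second, your sketched alternative by induction on $N$ has a genuine gap as stated: applying the inductive hypothesis to $F'$ on $I$ yields $\sup_{I^*}|F'|\lesssim\sup_I|F'|$ and $\sup_{I^*}|F''|\lesssim\delta^{-1}\sup_I|F'|$, but this leaves you needing $\sup_I|F'|\lesssim\delta^{-1}\sup_I|F|$, which is essentially (4.3b) itself --- so the induction does not close without an additional idea (e.g., a mean-value argument on a subinterval where $|F|$ is comparable to $\sup_I|F|$). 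Since you only offer the induction as an aside, this does not affect the main argument, but it should not be presented as an obviously cleaner route.
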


\noindent We now proceed to the proofs of Theorem 1.1 and 1.2, starting with Theorem 1.1.

\subsection {The proof of Theorem 1.1.}

In equation $(1.1)$, we write $v = \frac{\lambda}{|\lambda|}$ and $P(x_1,...,x_n) =  (x_1,...,x_n, f(x_1,...,x_n))$. Then $(1.1)$ can be rewritten as
\[ \hat{\mu}(\lambda) = \int_{\R^n} e^{-i|\lambda| P(x) \cdot v}\phi(x)\,dx \tag{4.4}\]
Since $|I(\lambda)| = |I(-\lambda)|$, without loss of generality we may assume $v = (v_1,...,v_{n+1})$ satisfies $v_{n+1} \geq 0$ as in the statement of the theorem. 
For such a $v$, one has $s(v)  = \min_{x \in \bar{D}} (P(x) \cdot v)$. Choose any $x_0$ such that $s(v) = P(x_0) \cdot v$. Then $(4.4)$ gives
\[ |\hat{\mu}(\lambda)| = \bigg|\int_{\R^n} e^{-i|\lambda| (P(x) \cdot v -  P(x_0) \cdot v)}\,\phi(x)\,dx\bigg| \tag{4.5}\]
We integrate the integral of $(4.5)$ in a polar coordinate system centered at $x_0$. Namely we rewrite this integral as 
\[ \hat{\mu}(\lambda) = c_n\int_{S^{n-1}}\int_0^{\infty} e^{-i|\lambda| (P(x_0 + r \omega ) \cdot v -  P(x_0) \cdot v)}\,r^{n-1}\phi(x_0 + r\omega)\,dr\, d\omega\tag{4.6}\]
Let $\psi(x)$ be a nonnegative smooth bump function on $\R$ supported on $[-1,1]$ with $\psi(x) = 1$ on $[-{1 \over 2},{1 \over 2}]$. Let $\psi_1(x) = 1 - \psi(x)$. We write $(4.6)$ as
 $I_1 + I_2$, where
\[I_1 =  c_n\int_{S^{n-1}}\int_0^{\infty} e^{-i|\lambda| (P(x_0 + r \omega ) \cdot v -  P(x_0) \cdot v)}\,r^{n-1}\phi(x_0 + r\omega)\psi(|\lambda|(P(x_0 + r \omega ) \cdot v -  P(x_0) \cdot v))\,dr\, d\omega \tag{4.7a}\]
\[I_2 =  c_n\int_{S^{n-1}}\int_0^{\infty} e^{-i|\lambda| (P(x_0 + r \omega ) \cdot v -  P(x_0) \cdot v)}\,r^{n-1}\phi(x_0 + r\omega)\psi_1(|\lambda|(P(x_0 + r \omega ) \cdot v -  P(x_0) \cdot v))\,dr\, d\omega \tag{4.7b}\]
Note that by simply taking absolute values in the integral $(4.7a)$, integrating, and then going back into rectangular coordinates, one sees that $(4.7a)$  is bounded by the right-hand side of
$(1.4)$. Thus our concern here is bounding $(4.7b)$. 

To simplify notation we let $P_{x_0,\omega,v}(r) = P(x_0 + r \omega ) \cdot v -  P(x_0) \cdot v$, so that the inside integral of $(4.7b)$ can be just written as 
\[\int_0^{\infty} e^{-i|\lambda| P_{x_0,\omega,v}(r) }\,r^{n-1}\phi(x_0 + r\omega)\psi_1(|\lambda P_{x_0,\omega,v}(r)| )\,dr\tag{4.8}\]

We examine $(4.8)$. Since $x_0$ was chosen so that $P(x_0) \cdot v $ achieves the minimal possible value of $P(x) \cdot v$ for $x \in \bar{D}$, one has that $P_{x_0,\omega,v}(0) = 0$
 and $\partial_r P_{x_0,\omega,v}(0) \geq 0$ (at least for $\omega$ for which there is a nonzero $r$ integral.)  Since
$f$ is convex, we also have that $\partial_{rr} P_{x_0,\omega,v}(r) \geq 0$ for all $r$ in $(4.8)$, so that $\partial_r P_{x_0,\omega,v}(r) \geq \partial_r P_{x_0,\omega,v}(0) 
\geq 0$ for all  $r$. Furthermore, since $f$ is real analytic here, either $\partial_r P_{x_0,\omega,v}(r)$ is identically zero in $r$  (in which case $P_{x_0,\omega,v}(r)$ is also  identically 
zero in $r$), or $\partial_r P_{x_0,\omega,v}(r) > 0$
for all $r > 0$. Because of the $\psi_1(|\lambda P_{x_0,\omega,v}(r)| )$ factor in $(4.8)$, in the former situations $(4.8)$ is just zero, so for the purposes of our analysis we may exclude such
cases and work under the assumption that $\partial_r P_{x_0,\omega,v}(r) > 0$ for all $r > 0$, which in turn implies $P_{x_0,\omega,v}(r) > 0$ for all $r > 0$.

\noindent Also, since $\partial_{rr} P_{x_0,\omega,v}(r) \geq 0$, and therefore $\partial_r P_{x_0,\omega,v}(s)$ is increasing, we have
\[ P_{x_0,\omega,v}(r) = \int_0^r  \partial_r P_{x_0,\omega,v}(s)\,ds \]
\[ \leq r\partial_r P_{x_0,\omega,v}(r)\]
We will normally use this in the form 
\[{1 \over |\lambda r\, \partial_r P_{x_0,\omega,v}(r)| } \leq \frac{1} {|\lambda P_{x_0,\omega,v}(r)|}\tag{4.9}\]
Next, we apply Lemma 4.1 to $\partial_r^{n+2} P_{x_0,\omega,v}(r)$, where one treats $r$ as the $x_n$ variable in the statement of the  lemma, and the $x_0,$ $\omega$, and $v$ 
variables as the remaining $n-1$ variables. While Lemma 4.1 is a local statement, since we are working on a compact set in all variables, it immediately implies a corresponding statement over all 
$(x_0, \omega, v, r)$. Namely there is some $k \geq n + 2$ and $\delta > 0$  such given any $(\omega, x_0, v, r)$, there is a $C_{x_0, \omega, v}$ (which may be zero) and an 
$n + 2 \leq l_{x_0, \omega, v} \leq k$ such that for $|r' - r| < \delta$ one has
\[ {1 \over 2}C_{x_0, \omega, v} \leq |\partial_r^{l_{x_0, \omega, v}} P_{x_0,\omega,v}(r)| \leq C_{x_0, \omega, v} \tag{4.10}\]
We now apply Lemma 4.3 in conjunction with $(4.10)$. By applying $(4.3b)$ repeatedly, we get that there is a constant $b$ such that for all $1 \leq j \leq n + 2$ and all $r > 0$ one has
\[\sup_{t\in [0,r]} |\partial_r^j P_{x_0,\omega,v}(r)| \leq br^{-j+1}\sup_{t\in [0,r]} |\partial_r P_{x_0,\omega,v}(r)|\tag{4.11}\]
Since $\partial_r P_{x_0,\omega,v}(r)$ is nonnegative and increasing, this implies that
\[|\partial_r^j P_{x_0,\omega,v}(r)| \leq br^{-j+1} \partial_r P_{x_0,\omega,v}(r) \tag{4.12}\]
We now proceed to the analysis of $(4.8)$. Since $\partial_r P_{x_0,\omega,v}(r)$ is positive and nondecreasing for $r > 0$, $\partial_r P_{x_0,\omega,v}(r)$ is bounded below away from zero on the support of the
$\psi_1(|\lambda P_{x_0,\omega,v}(r)| )$ factor appearing in $(4.8)$. As a result, we may integrate by parts as follows, with no endpoint terms appearing. We write 
\[e^{-i|\lambda| P_{x_0,\omega,v}(r) } =  -i|\lambda| \partial_r P_{x_0,\omega,v}(r) e^{-i|\lambda| P_{x_0,\omega,v}(r) } \times (-i|\lambda| \partial_r P_{x_0,\omega,v}(r))^{-1}\]
 In $(4.8)$ we integrate  $-i|\lambda| \partial_r P_{x_0,\omega,v}(r) e^{-i|\lambda| P_{x_0,\omega,v}(r) }$ back to $e^{-i|\lambda| P_{x_0,\omega,v}(r) }$, and differentiate 
$ (-i|\lambda| \partial_r P_{x_0,\omega,v}(r))^{-1}$ times the remaining factors. We perform this integration by parts a total of $n + 1$ times.

The idea is that after $k$ integrations by parts, the integrand incurs a factor bounded by a constant times $|\lambda P_{x_0,\omega,v}(r)|^{-k}$. To see why this is
the case, we examine all the possibile factors the $r$ derivative may land in an integration by parts, and we will see that each time the integrand is multiplied a factor bounded by a constant
times $|\lambda P_{x_0,\omega,v}(r)|^{-1}$. We start with the situation where the derivative lands on a $(\partial_r P_{x_0,\omega,v}(r))^l$ appearing in the denominator of a ratio of derivatives of
$ P_{x_0,\omega,v}(r)$, such as the original $(-i|\lambda| \partial_r P_{x_0,\omega,v}(r))^{-1}$ in the first integration by parts. Then the $(\partial_r P_{x_0,\omega,v}(r))^{-l}$
becomes $\partial_{rr}  P_{x_0,\omega,v}(r)(\partial_r P_{x_0,\omega,v}(r))^{-l - 1}$, so that we have incurred a factor bounded by $C|\partial_{rr}  P_{x_0,\omega,v}(r)(\partial_r P_{x_0,\omega,v}(r))^{-1}|$. By $(4.12)$ this  is at most $C'r^{-1}$. When combined with the preexisting $(-i|\lambda| \partial_r P_{x_0,\omega,v}(r))^{-1}$ factor in the 
integration by parts, we see we have a factor bounded by $C' |\lambda r\, \partial_r P_{x_0,\omega,v}(r)|^{-1}$, which by $(4.9)$ is bounded by the desired 
 constant times $|\lambda P_{x_0,\omega,v}(r)|^{-1}$, the factor we seek.

Next, we consider the case where the $r$ derivative lands on some $\partial_r^k P_{x_0,\omega,v}(r)$ appearing in the numerator of a ratio of derivatives of $P_{x_0,\omega,v}(r)$. Then 
the  $\partial_r^k P_{x_0,\omega,v}(r)$ becomes a  $\partial_r^{k+1} P_{x_0,\omega,v}(r)$. Whereas before $|\partial_r^k P_{x_0,\omega,v}(r)|$ was being estimated using $(4.12)$ with
$j = k$, we now estimate  $|\partial_r^{k+1} P_{x_0,\omega,v}(r)|$ with $(4.12)$ with $j = k+1$. Thus we incur an additional factor of $C r^{-1}$. Hence again when combined with
 the preexisting $(-i|\lambda| \partial_r P_{x_0,\omega,v}(r))^{-1}$ factor, we obtain the desired factor of a constant times $|\lambda P_{x_0,\omega,v}(r)|^{-1}$.

Next, we look at when the $r$ derivative lands on the $\psi_1(|\lambda P_{x_0,\omega,v}(r)| )$ factor. We obtain a factor bounded by a constant times 
$|\lambda| \partial_r P_{x_0,\omega,v}(r)$, which is exactly cancelled out by the preexisting $(-i|\lambda| \partial_r P_{x_0,\omega,v}(r))^{-1}$ factor. Thus it would seem we just incur a 
factor bounded by a constant here. However, the derivative turns the $\psi_1(|\lambda P_{x_0,\omega,v}(r)| )$ into a $\psi_1'(|\lambda P_{x_0,\omega,v}(r)| )$ and $\psi_1'$ is compactly
supported. Hence we may simply insert $1 \leq C |\lambda P_{x_0,\omega,v}(r)|^{-1}$ here to obtain the desired factor of a constant times
 $|\lambda P_{x_0,\omega,v}(r)|^{-1}$.

We move on to the case where the derivative lands on $r^{n-1}$. This gives a factor of $Cr^{-1}$, which like in the earlier cases gives us the factor we seek. Lastly, we consider the case
where the derivative lands on the $\phi(x_0 + r\omega)$ factor. Here we simply incur a bounded factor, which will be better than the $Cr^{-1}$ we need.

We have now considered all possibilities and we see that with each integration by parts incurs a factor bounded by $C |\lambda P_{x_0,\omega,v}(r)|^{-1}$. Thus if we 
integrate by parts in this fashion $n + 1$ times, we see that the inside integral $(4.8)$ is bounded by the following, where $K$ denotes the interval of integration in this integral.
\[ C \int_K  r^{n-1}\psi_1(|\lambda P_{x_0,\omega,v}(r)| )\frac{1} {|\lambda P_{x_0,\omega,v}(r)|^{n+1}}\,dr\tag{4.13}\]
Here the constant $C$ will depend on $f$ and $\phi$, but not on $x_0$, $\omega$, $v$, or $\lambda$. Next, since $\psi_1(x)$ is supported on $|x| \geq  {1 \over 2}$, we have 
that $(4.13)$ is bounded by 
\[ C \int_{\{r \in K:\,|\lambda P_{x_0,\omega,v}(r)| \geq {1 \over 2}\}} \frac{r^{n-1}} {|\lambda P_{x_0,\omega,v}(r)|^{n+1}}\,dr\tag{4.14}\]
We split $(4.14)$ dyadically as 
\[ C\sum_{j = 0}^{\infty} \int_{\{r \in K:\,2^{j-1}|\lambda|^{-1} \leq P_{x_0,\omega,v}(r) < 2^j|\lambda|^{-1}\}} \frac{r^{n-1}}{|\lambda  P_{x_0,\omega,v}(r)|^{n+1}}\,dr\tag{4.15}\]
Let $[a_j,b_j]$ denote the $r$ interval of integration in $(4.15)$ corresponding to $j$ and denote its length $b_j  - a_j$ by $l_j$.
Since $\partial_{rr}  P_{x_0,\omega,v}(r) \geq 0$ and therefore $\partial_{r}  P_{x_0,\omega,v}(r)$ is increasing, the lengths of the
 intervals of integration in $(4.15)$ increase no faster than if $ P_{x_0,\omega,v}(r)$ were linear, so that $l_j \leq 2^j l_0$ and $r^{n-1} \leq 2^{j(n-1)} b_0^{n-1}$ on $[a_j,b_j]$. (This can
be seen rigorously by applying the mean-value theorem to $ (P_{x_0,\omega,v})^{-1} (t)$ on the intervals 
$[2^{j-1}|\lambda|^{-1}, 2^j|\lambda|^{-1}]$ for $j \geq 0$ and comparing.)

Thus the 
$j$th term of $(4.15)$ is bounded by $C' 2^{j(n-1)} b_0^{n-1}\times 2^j l_0 \times 2^{-j(n+1)} = C'2^{-j} b_0^{n-1} l_0$. Adding this over all $j$ gives that $(4.15)$ is bounded by
$C''b_0^{n-1} l_0$, which is bounded by a constant times the $j = 0$ term of $(4.15)$, where one has $|\lambda  P_{x_0,\omega,v}(r)| \sim 1$. As a result, we have that $(4.15)$ is bounded
by
\[C'''\int_{\{r \in K:\, |\lambda  P_{x_0,\omega,v}(r)| \leq 1\}} r^{n-1}\,dr \tag{4.16}\]
Going back from polar to retangular coordinates and integrating $(4.16)$ in the $\omega$ variables  leads to a bound of $C'''m(\{x \in \bar{D}: |\lambda| (P(x) \cdot v - P(x_0) \cdot v) < 1\})$.
Since the above measure for $P(x) \cdot v  - P(x_0) \cdot v = P(x) \cdot v  - s(v)$ on $D$ corresponds to, up to a constant factor, the corresponding measure for
 $x \cdot v - s(v)$ on the surface $S'$, and $|\lambda|$ here corresponds to $|t|$ in the statement of Theorem 1.1, this is exactly the desired right hand side of $(1.4)$ and we are done.

\qed

\subsection{The proof of Theorem 1.2.}

We define $E = \{x \in \bar{D}:\,g_i(x) < \,0 {\rm\,\, for\,\, all\,\,} i \}$ and our goal is to bound $I(\lambda)$ given in $(1.5)$ by
\[I(\lambda) = \int_E e^{ - i\lambda_1 x_1 - ... -  i\lambda_nx_n -i\lambda_{n+1}f(x_1,...,x_n)} \phi(x_1,...,x_n)\,dx_1...\,dx_n\tag{4.17}\]
Once again we let  $v = {\lambda \over |\lambda|}$, and since $|I(\lambda)| = |I(-\lambda)|$, once again we may assume $v_{n+1} \geq 0$ as in the statement of the theorem.
Once again we subtract $s(v)$ from the phase, where $s(v)  = \min_{x \in \bar{D}} P(x) \cdot v$, and $x_0$ is any point such that $s(v) = P(x_0) \cdot v$. 
So like $(4.5)$ we have
\[|I(\lambda)| = \bigg|\int_{\R^n} e^{-i|\lambda| (P(x) \cdot v -  P(x_0) \cdot v)}\,\phi(x)\,dx\bigg| \tag{4.18}\]
Next, we let $A = \{x \in D: |\lambda (P(x) \cdot v -  P(x_0) \cdot v)| < 1 \}$ and $2A$ its $x_0$-centered double $\{x_0 + 2(x - x_0): x \in A\}$. We let $\chi_1(x)$ be
 the characteristic function of $2A$ and 
$\chi_2(x) = 1 - \chi_1(x)$, the characteristic function of $(2A)^c$. Denote the integral in $(4.18)$ by $I_0(\lambda)$. We  write $I_0(\lambda) = I_1(\lambda) + I_2(\lambda)$,
 where for $j = 1, 2$ we have
\[I_j(\lambda) = \int_{\R^n}  \chi_j(x) e^{-i|\lambda| (P(x) \cdot v -  P(x_0) \cdot v)}\,\phi(x)\,dx \tag{4.19}\]
We bound $|I_1(\lambda)|$ simply by taking absolute values inside the integrand of $(4.19)$ and integrating. We see that $|I_1(\lambda)|$ is bounded by the 
first term on the right-hand side of $(1.6)$, recalling that $P(x_0) \cdot v = s(v)$ and that the measure of a subset of $S'$ is comparable to the measure of its
projection onto the first $n-1$ coordinates. Hence we devote our attention to bounding $|I_2(\lambda)|$.

In $(4.19)$ for $j = 2$, we switch to polar coordinates, and as in the proof of Theorem 1.1 we define
\[P_{x_0,\omega,v}(r) = P(x_0 + r\omega) \cdot v -  P(x_0) \cdot v\]
Analogous to $(4.6)$ we have
\[I(\lambda) = \int_{S^{n-1}} \int_{E_{\omega, x_0}} e^{-i|\lambda| P_{x_0,\omega,v}(r)}r^{n-1} \phi(x_0 + r\omega)\,dr\,d\omega \tag{4.20}\]
Here $E_{\omega, x_0}$ is the cross section of $E$ being integrated over in $r$. We claim that there is a fixed $N$ such that $E_{\omega, x_0}$ is comprised of at most $N$ intervals for
every $\omega$ and $x_0$. To see this, we locally apply Lemma 4.2 to the functions $g_i(x_0 + r\omega)$, where $r$ corresponds to the $x_n$ variable in Lemma 4.2 and the $x_0$ and
$\omega$ variables correspond to the $x_i$ variables for $i < n$; a compactness argument then gives the statement over all $x_0$, $r$, and $\omega$. Note that the fact that we
 are restricting to $\bar{D}$ does not matter since $\bar{D}$ is convex and won't increase the number of intervals. Similarly the fact that we are restricting to the points where
$|\lambda| P_{x_0,\omega,v}(r) > 1$ doesn't increase the number of intervals since this function is increasing. 

The domain of integration of the inner integral of $(4.20)$, if nonempty, is a collection of at most $N$ intervals. On a given interval, which we call $J$, we integrate by parts as we did in
the proof of Theorem 1.1, but only once this time. The function  $\partial_r P_{x_0,\omega,v}(r)$ will never be zero on $J$ since for $I_2(\lambda)$ we are on the set where 
$|\lambda| P_{x_0,\omega,v}(r) > 1$; since $P_{x_0,\omega,v}(0) = 0$ and $\partial_r P_{x_0,\omega,v}(0)$ is nonnegative and increasing, the latter due to the convexity of $f(x)$, 
we have that  if $|\lambda| P_{x_0,\omega,v}(r) > 1$ then $\partial_r P_{x_0,\omega,v}(r) > 0$. Thus we may proceed as in the proof of Theorem 1.1 and write 
 \[e^{-i|\lambda| P_{x_0,\omega,v}(r) } =  -i|\lambda| \partial_r P_{x_0,\omega,v}(r) e^{-i|\lambda| P_{x_0,\omega,v}(r) } \times (-i|\lambda| \partial_r P_{x_0,\omega,v}(r))^{-1}\]
 In the inner integral of $(4.20)$
 we integrate  $-i|\lambda| \partial_r P_{x_0,\omega,v}(r) e^{-i|\lambda| P_{x_0,\omega,v}(r) }$ back to $e^{-i|\lambda| P_{x_0,\omega,v}(r) }$, and differentiate 
$ (-i|\lambda| \partial_r P_{x_0,\omega,v}(r))^{-1}$ times the remaining factors. We also get two endpoint terms this time. If we denote the endpoints of $J$ by $\alpha_J$ and $\beta_J$,
then the terms obtained at these endpoints are of magnitude bounded by $C (|\lambda| \partial_r P_{x_0,\omega,v}(\alpha_J))^{-1} (\alpha_J)^{n-1}$ and 
$C (|\lambda| \partial_r P_{x_0,\omega,v}(\beta_J))^{-1} (\beta_J)^{n-1}$ respectively. 

We now look at the several places the $r$ derivative may land in this integration by parts and the effect it has. In every case we will simply take absolute values of the resulting integrand 
and bound the resulting integral. First, suppose the derivative lands on the $r^{n-1}$ factor. Then we incur a factor of $Cr^{-1}$, and the absolute value of the resulting term is at most
\[C \int_J  (|\lambda| r \partial_r P_{x_0,\omega,v}(r))^{-1} r^{n-1}\,dr \tag{4.21}\]
Next, we consider the case where the derivative lands on the $(-i|\lambda| \partial_r P_{x_0,\omega,v}(r))^{-1}$ factor. The resulting term is at most
\[C \int_J(|\lambda| (\partial_r P_{x_0,\omega,v}(r))^{-1})^2 (|\lambda| \partial_{rr} P_{x_0,\omega,v}(r))r^{n-1}\,dr\tag{4.22}\]
Here we are using that $\partial_{rr} P_{x_0,\omega,v}(r) \geq 0$ due to the convexity of $f(x)$. In $(4.22)$ we integrate by parts again, this time integrating the $(|\lambda| (\partial_r P_{x_0,\omega,v}(r))^{-1})^2 (|\lambda| \partial_{rr} P_{x_0,\omega,v}(r))$ factor back to $(|\lambda| \partial_r P_{x_0,\omega,v}(r))^{-1}$. We get one integral term bounded by $(4.21)$, 
and two endpoint terms that like before are bounded by $C (|\lambda| \partial_r P_{x_0,\omega,v}(\alpha_J))^{-1} (\alpha_J)^{n-1}$ and 
$C (|\lambda| \partial_r P_{x_0,\omega,v}(\beta_J))^{-1} (\beta_J)^{n-1}$.

Lastly, the $r$ derivative may land on the $\phi(x_0 + r\omega)$ factor. In this case one incurs at most a constant factor, which is better than the $Cr^{-1}$ factor incurred when the 
derivative lands on the $r^{n-1}$ factor. Hence once again $(4.21)$ serves as a bound for the term in question.

Next, we bound the above endpoint terms in terms of an integral resembling $(4.21)$. Namely, since $\partial_r P_{x_0,\omega,v}(r)$ is increasing, we have 
\[(|\lambda| \partial_r P_{x_0,\omega,v}(\alpha_J))^{-1} (\alpha_J)^{n-1} \leq C\int_{{\alpha_J \over 2}}^{\alpha_J} {(|\lambda|r\partial_r P_{x_0,\omega,v}(r))}^{-1}r^{n-1}\,dr\]
\[(|\lambda| \partial_r P_{x_0,\omega,v}(\beta_J))^{-1} (\beta_J)^{n-1} \leq C\int_{{\beta_J \over 2}}^{\beta_J} {(|\lambda|r\partial_r P_{x_0,\omega,v}(r))}^{-1}r^{n-1}\,dr
\tag{4.23}\]
We see that we have boundedly many terms, all bounded by integrals of the form $(4.21)$ or $(4.23)$. The intervals $J$ are all derived from the characteristic function of 
$(2A)^c$ above $(4.19)$, so any of the intervals in either $(4.21)$ or $(4.23)$ come from the characteristic function of $A^c$. Since $A =  \{x \in D: |\lambda (P(x) \cdot v -  P(x_0) \cdot v)| 
< 1 \}$, this means the intervals all are a subset of the points where  $|\lambda P_{x_0,\omega,v}(r)| \geq  1$. As a result, given that there are boundedly many intervals $J$, the
sum of all of the terms $(4.21)$ and $(4.23)$ bounding our inner integral of $(4.20)$ is bounded by
\[C'\int_{|\lambda P_{x_0,\omega,v}(r)| > 1} {(|\lambda|r\partial_r P_{x_0,\omega,v}(r))}^{-1}r^{n-1}\,dr \tag{4.24}\]
Equation $(4.9)$ holds exactly as in the proof of Theorem 1.1, so $(4.24)$ in turn is bounded by 
\[C''\int_{|\lambda P_{x_0,\omega,v}(r)| > 1} {|\lambda  P_{x_0,\omega,v}(r)|}^{-1}r^{n-1}\,dr \tag{4.25}\]
Thus we have an overall bound for $I_2(\lambda)$ of
\[|I_2(\lambda)| \leq C'' \int_{S^{n-1}}\int_{|\lambda P_{x_0,\omega,v}(r)| > 1} {|\lambda  P_{x_0,\omega,v}(r)|}^{-1}r^{n-1}\,dr\,d\omega\tag{4.26}\]
Converting back into rectangular coordinates, we obtain
\[|I_2(\lambda)| \leq C''' \int_{\{x \in \bar{D}: |\lambda|(P(x) \cdot v  - P(x_0) \cdot v) > 1\}}(|\lambda|(P(x) \cdot v - P(x_0) \cdot v ))^{-1} \,dx \tag{4.27}\]
We recall that $P(x_0) \cdot v = s(v)$ and decompose $(4.27)$ dyadically to obtain a bound 
\[|I_2(\lambda)| \leq C'''' \sum_{j = 1}^{\infty} 2^{-j}m(\{x \in \bar{D}: 2^{j-1}|\lambda|^{-1} \leq (P(x) \cdot v  - s(v)) < 2^j|\lambda|^{-1}\})\tag{4.28}\]
Since the above measure for $P(x) \cdot v  - s(v)$ on $D$ corresponds to, up to a constant factor, the corresponding measure for $x \cdot v - s(v)$ on the surface $S'$, 
and $|\lambda|$ here corresponds to $|t|$ in the statement of Theorem 1.2, we
see that $|I_2(\lambda)|$ is bounded by the infinite series in $(1.6)$. This concludes the proof of Theorem 1.2. 

\qed

\section{References.}

\noindent [BakMVW] J. Bak, D. McMichael, J. Vance, S. Wainger, {\it Fourier transforms of surface area measure on convex surfaces in $\R^3$}, Amer. J. Math.{\bf 111} (1989), no.4,
 633-668.  \parskip = 4pt\baselineskip = 3pt

\noindent [BaGuZhZo] S. Basu, S. Guo, R. Zhang, P. Zorin-Kranich, {\it A stationary set method for estimating oscillatory integrals}, J. Eur. Math. Soc. DOI 10.4171/JEMS/1506

\noindent [BrHoI] L. Brandolini, S. Hoffmann, A. Iosevich, {\it Sharp rate of average decay of the Fourier transform of a bounded set}, Geom. Funct. Anal. {\bf 13} (2003), no. 4, 671-680. 

\noindent [BrIT] L. Brandolini, A. Iosevich, G. Travaglini, {\it Planar convex bodies, Fourier transform, lattice points, and irregularities of distribution}, Trans. Amer. Math. Soc. {\bf 355} (2003), no.9, 3513-3535.

\noindent [BNW] J. Bruna, A. Nagel, and S. Wainger, {\it Convex hypersurfaces and Fourier transforms}, Ann. of Math. (2) {\bf 127} no. 2, (1988), 333--365. 

\noindent [CaCWr] A. Carbery, M. Christ, J. Wright, {\it Multidimensional van der Corput and sublevel set estimates}, J. Amer. Math. Soc. {\bf 12} (1999), no. 4, 981-1015. 

\noindent [CoDiMaMu] M. Cowling, S. Disney, G. Mauceri, D. Muller, {\it Damping oscillatory integrals}, Invent. Math. {\bf 101} (1990), no.2, 237-260.

\noindent [G1] M. Greenblatt, {\it A method for bounding oscillatory integrals in terms of non-oscillatory integrals}, submitted. 

\noindent [G2] M. Greenblatt, {\it Hyperplane integrability conditions and smoothing for Radon transforms}, J. Geom. Anal. {\bf 31} (2021), no.4, 3683-3697.

\noindent [Gr] J. Green, {\it Algebraic stability of oscillatory integral estimates: a calculus for uniform estimates}, preprint.

\noindent [Gre] P. Gressman, {\it Scalar oscillatory integrals in smooth spaces of homogeneous type}, Rev. Mat. Iberoam. {\bf 31} (2015), no. 1, 215–244. 

\noindent [Ha1] G. H. Hardy, {\it On the expression of a number as the sum of two squares},  Quart. J. Math. {\bf 46} (1915), 263-283.

\noindent [Ha2] G. H. Hardy, {\it On Dirichlet's divisor problem}, Proc. London Math. Soc. (2) {\bf 15} (1916) 1-25.

\noindent [Hl1] E. Hlawka, {\it \"{U}ber Integrale auf konvexen K\"{o}rpern. I.}  Monatsh. Math. {\bf 54} (1950), 1-36. 

\noindent [Hl2] E. Hlawka, {\it \"{U}ber Integrale auf konvexen K\"{o}rpern. II.} Monatsh. Math. {\bf 54} (1950), 81-99. 

\noindent [IL] A. Iosevich, E Liflyand, {\it Decay of the Fourier transform, analytic and geometric aspects}, Birkhäuser/Springer, Basel, 2014. xii+222 pp.

\noindent [ISa] A. Iosevich, E. Sawyer, {\it Maximal averages over surfaces},  Adv. Math. {\bf 132} (1997), no. 1, 46-119.

\noindent [ISaSe1] A. Iosevich, E. Sawyer, A. Seeger, {\it Two problems associated with convex finite type domains}, Publ. Mat. {\bf 46}, no. 1 (2002), 153-177.

\noindent [ISaSe2] A. Iosevich, E. Sawyer, A. Seeger, {\it Mean square discrepancy bounds for the number of lattice points in large convex bodies}, J. Anal. Math. {\bf 87} (2002), 209-230.

\noindent [ISaSe3] A. Iosevich, E. Sawyer, A. Seeger, {\it Mean lattice point discrepancy bounds. II. Convex domains in the plane}, J. Anal. Math. {\bf 101} (2007), 25-63. 

\noindent [Kr2] E. Kr\"{a}tzel, {\it Analytische Funktionen in der Zahlentheorie}, [Analytic functions in number theory]
Teubner-Texte zur Mathematik [Teubner Texts in Mathematics], {\bf 139}. B. G. Teubner, Stuttgart, 2000. 288 pp. ISBN: 3-519-00289-2.

\noindent [LiY] X. Li and X. Yang, {\it An improvement on Gauss's circle problem and Dirichlet's divisor problem}, preprint.  arxiv 2308.14859.

\noindent [Mi] D. J. Miller, {\it A preparation theorem for Weierstrass systems}, Trans. Amer. Math. Soc. {\bf 358} (2006), no. 10, 4395-4439.

\noindent [PS] D. H. Phong, E. M. Stein, {\it The Newton polyhedron and oscillatory integral operators}, Acta Mathematica {\bf 179} (1997), 107-152.

\noindent [R1] B. Randol, {\it A lattice point problem I}, Trans. Amer. Math. Soc. {\bf 121} (1966), 257-268.

\noindent [R2] B. Randol, {\it A lattice point problem II}, Trans. Amer. Math. Soc. {\bf 125} (1966), 101-113.

\noindent [ShS] R. Shakarchi, E. Stein, {\it Functional analysis. Introduction to further topics in analysis}. Princeton Lectures in Analysis {\bf 4}. Princeton University Press, Princeton, NJ, 2011. xviii+423 pp. ISBN: 978-0-691-11387-6. 
\vskip 0.5 in

\noindent Department of Mathematics, Statistics, and Computer Science \hfill \break
\noindent University of Illinois at Chicago \hfill \break
\noindent 322 Science and Engineering Offices \hfill \break
\noindent 851 S. Morgan Street \hfill \break
\noindent Chicago, IL 60607-7045 \hfill \break
\noindent greenbla@uic.edu

\end{document}